\definecolor{myblue}{rgb}{.9, .9, 1}
\newcommand*\mybluebox[1]{%
	\colorbox{myblue}{\hspace{1em}#1\hspace{1em}}
}
\crefname{equation}{}{equations}
\crefname{chapter}{Appendix}{chapters}
\crefname{item}{}{items}
\crefname{figure}{Figure}{figures}
\def\namedlabel#1#2{\begingroup
   \def\@currentlabel{#2}%
   \label{#1}\endgroup
}
\def\th@plain{%
  \thm@notefont{}
  \itshape 
}
\def\th@definition{%
  \thm@notefont{}
  \normalfont 
}
\newtheorem{theorem}{Theorem}[section]
\newtheorem{lemma}[theorem]{Lemma}
\newtheorem{corollary}[theorem]{Corollary}
\newtheorem{proposition}[theorem]{Proposition}
\newtheorem{fact}[theorem]{Fact}
\theoremstyle{definition}
\newtheorem{definition}[theorem]{Definition}
\theoremstyle{definition}
\newtheorem{example}[theorem]{Example}
\theoremstyle{definition}
\newtheorem{remark}[theorem]{Remark}
\setlist[enumerate]{nosep}
\newcommand{\sepp}{\setlength{\itemsep}{-2pt}}
\newcommand{\menge}[2]{\{{#1}~\big |~{#2}\}}
\newcommand{\scal}[2]{\left\langle {#1},{#2} \right\rangle}
\newcommand{\nnn}{\ensuremath{{n\in{\mathbb N}}}}
\newcommand{\RR}{\ensuremath{\mathbb R}}
\newcommand{\RP}{\ensuremath{\mathbb{R}_+}}
\newcommand{\RPP}{\ensuremath{\mathbb{R}_{++}}}
\newcommand{\argmin}{\ensuremath{\operatorname*{argmin}}}
\newcommand{\inte}{\ensuremath{\operatorname{int}}}
\newcommand{\ran}{\ensuremath{\operatorname{ran}}}
\newcommand{\dom}{\ensuremath{\operatorname{dom}}}
\newcommand{\intdom}{\ensuremath{\operatorname{int}\operatorname{dom}}\,}
\newcommand{\Fix}{\ensuremath{\operatorname{Fix}}}
\newcommand{\Id}{\ensuremath{\operatorname{Id}}}
\newcommand{\prox}{\ensuremath{\operatorname{Prox}}}
\newcommand{\infconv}{\ensuremath{\mbox{\small$\,\square\,$}}}
\newcommand{\lev}[1]{\ensuremath{\mathrm{lev}_{\leq #1}\:}}
\newcommand{\fenv}[2][]%
{\ensuremath{\,\overrightarrow{\operatorname{env}}_{#2}}^{#1}}
\newcommand{\benv}[2][]%
{\ensuremath{\,\overleftarrow{\operatorname{env}}_{#2}^{#1}}}
\newcommand{\env}[2][]%
{\ensuremath{{\operatorname{env}}_{#2}^{#1}}}
\newcommand{\bP}[2][]%
{\ensuremath{\,\overleftarrow{\operatorname{P}}_{#2}^{#1}}}
\newcommand{\fP}[2][]%
{\ensuremath{\,\overrightarrow{\operatorname{P}}_{#2}^{#1}}}
\newcommand{\fprox}[1]{\overrightarrow{\operatorname{P}}_%
{\negthinspace\negthinspace#1}}
\newcommand{\bprox}[1]{\overleftarrow{\operatorname{P}}_%
{\negthinspace\negthinspace #1}}
\newcommand{\fproj}[1]{\overrightarrow{\operatorname{P}\thinspace}_%
{\negthinspace\negthinspace #1}}
\newcommand{\proj}[1]{{\operatorname{P}\thinspace}_%
{\negthinspace\negthinspace #1}}
\newcommand{\bproj}[1]{\overleftarrow{\thinspace \operatorname{P}\thinspace}_%
{\negthinspace\negthinspace #1}}
\begin{document}

\title{Regularizing with Bregman--Moreau envelopes}

\author{
Heinz H.\ Bauschke\thanks{
Mathematics, University of British Columbia, Kelowna, B.C.\ V1V~1V7, Canada. 
E-mail: \texttt{heinz.bauschke@ubc.ca}.},~
Minh N.\ Dao\thanks{
CARMA, University of Newcastle, Callaghan, NSW 2308, Australia.
E-mail: \texttt{daonminh@gmail.com}.}~~~and~
Scott B.\ Lindstrom\thanks{
CARMA, University of Newcastle, Callaghan, NSW 2308, Australia.
E-mail: \texttt{scott.lindstrom@uon.edu.au}.}
}

\date{November 16, 2018}

\maketitle

\begin{abstract} \noindent
Moreau's seminal paper, 
introducing what is now called the Moreau envelope and the
proximity operator (also known as the proximal mapping), appeared in 1962. 
The Moreau envelope of a given convex function provides a regularized
version which has additional desirable properties such as differentiability
and full domain. Forty years ago, Attouch proposed using
the Moreau envelope for regularization. Since then, this branch
of convex analysis has developed in many fruitful directions.
In 1967,
Bregman introduced what is nowadays known as the Bregman distance
as a measure of discrepancy between two points generalizing
the square of the Euclidean distance.
Proximity operators based on the Bregman distance have become a
topic of significant research as they are useful in the algorithmic
solution of optimization problems. 
More recently, 
in 2012, Kan and Song studied regularization aspects of the
left Bregman--Moreau envelope even for nonconvex functions. 
In this paper, we complement previous works by analyzing 
the left and right Bregman--Moreau envelopes 
and by providing additional asymptotic results. 
Several examples are provided. 
\end{abstract}

{\small 
\noindent
{\bfseries 2010 Mathematics Subject Classification:}
Primary 90C25;
Secondary  26A51, 26B25, 47H05, 47H09.

\noindent
{\bfseries Keywords:}
Bregman distance,
convex function, 
Moreau envelope,
proximal mapping,
proximity operator,
regularization. 
}

\section{Introduction}

We assume throughout that
\begin{empheq}[box=\mybluebox]{equation}
\text{$X := \RR^J$,}
\end{empheq}
which we equip with 
the standard inner product $\scal{\cdot}{\cdot}$ and 
the induced Euclidean norm $\|\cdot\|$.

Let $\theta\colon X\to \left]-\infty, +\infty\right]$ be convex, lower semicontinuous, and
proper\footnote{See \cite{Roc70}, \cite{BC17}, 
\cite{MorNam}, and 
\cite{RW98}
for
background material in convex analysis from which we adopt our
notation which is standard. 
We also set  $\RPP := \menge{x \in \RR}{x >0}$.}.
The \emph{Moreau envelope} with parameter $\gamma\in\RPP$ is the
function
\begin{equation}
\label{e:170510a}
\env[\gamma]{\theta}\colon x\mapsto \inf_{y\in X} \big(\theta(y) +
\frac{1}{2\gamma}\|x-y\|^2\big). 
\end{equation}
Moreau only considered the case in which $\gamma=1$;
the systematic study involving the parameter $\gamma$ originated
with 
Attouch (see \cite{Attouch77} and \cite{Attouch84}).
If $\theta=\iota_C$, the indicator function of a
nonempty closed convex subset $C$ of $X$,
then the corresponding Moreau envelope with parameter $\gamma$ is
$\tfrac{1}{2\gamma}d_C^2$, where $d_C$ is the distance function
of the set $C$. 
While the indicator function has (effective) domain $C$ and 
is differentiable
only on $\inte C$, the interior of $C$, the Moreau envelope 
is much better behaved: 
for instance, it has full domain and is 
differentiable everywhere.

Now assume that 
\begin{empheq}[box=\mybluebox]{equation}
\text{$f \colon X \to\left]-\infty, +\infty\right]$ is convex and
differentiable on $U := \intdom f \neq\varnothing$.}
\end{empheq}
The \emph{Bregman distance}\footnote{Note that $D_f$ is
\emph{not} a distance in the sense of metric topology; however,
this naming convention is now ubiquitous.}  associated with $f$, 
first explored by Bregman in \cite{Bre67} (see also
\cite{CZ97}), is 
\begin{equation} 
\label{e:Df}
D_f \colon X \times X \to \left[0, +\infty\right] \colon (x,y) \mapsto
\begin{cases}
f(x) -f(y) -\scal{\nabla f(y)}{x -y}, &\text{~if~} y\in U; \\
+\infty, &\text{~otherwise}.
\end{cases}
\end{equation} 
It serves as a measure of discrepancy between two points and thus
gives rise to associated projectors (nearest-point mappings)
and proximal mappings
which have been employed to solve convex feasibility and
optimization 
problems algorithmically; see, e.g.,
\cite{ACM}, 
\cite{BBT}, 
\cite{BBC03},
\cite{BC03},
\cite{BCN06},
\cite{BauLew},
\cite{BN02},
\cite{BurKas12}, 
\cite{ButIus},
\cite{BC01},
\cite{CH02},
\cite{CR98},
\cite{CZ92},
\cite{CZ97},
\cite{CT93},
\cite{CKS},
\cite{CN16},
\cite{Eck93},
\cite{KS},
\cite{KRS}, 
\cite{Kiw97},
\cite{Nguyen17}, 
\cite{Nguyen16}, 
and
\cite{Sabach11}. 
The classical case arises when $f=\tfrac{1}{2}\|\cdot\|^2$ in
which case $D_f(x,y)= \tfrac{1}{2}\|x-y\|^2=D_f(y,x)$. 
This clearly suggests replacing the quadratic term in
\eqref{e:170510a} by the Bregman distance. 
However, because different assignments of $f$ may allow for
cases in which 
$D_f(x,y)\neq D_f(y,x)$, we actually are led to consider
\emph{two} envelopes:
the \emph{left} and \emph{right} 
\emph{Bregman--Moreau envelopes} are defined by
\begin{equation}
\label{e:bba}
\benv[\gamma]{\theta} \colon X\to \left[-\infty, +\infty\right] \colon
y\mapsto \inf_{x\in X} \big(\theta(x) +\frac{1}{\gamma}D_f(x,y)\big)
\end{equation}
and 
\begin{equation}
\label{e:fba}
\fenv[\gamma]{\theta} \colon X \to \left[-\infty, +\infty\right] \colon
x \mapsto
\inf_{y\in X} \big(\theta(y) +\frac{1}{\gamma}D_f(x,y)\big),
\end{equation}
respectively. 
It follows from the definition (see also \cref{ex:examples} below) 
that if $f=\frac12\|\cdot\|^2$, then $D_f\colon(x,y)\mapsto\frac12\|x-y\|^2$, and 
$\benv[\gamma]{\theta}=\fenv[\gamma]{\theta}=\theta\infconv(\frac{1}{2\gamma}\|\cdot\|^2)$ 
is the classical \emph{Moreau envelope} of $\theta$ of parameter $\gamma$; 
see \cite{Mor62}, \cite{Mor65}, and also \cite[Section~12.4]{BC17} and \cite[Section~1.G]{RW98}. 
When $\gamma =1$, 
we simply write $\benv{\theta}$ for $\benv[1]{\theta}$, 
and $\fenv{\theta}$ for $\fenv[1]{\theta}$, which were introduced in \cite{BCN06}. 
Bregman--Moreau envelopes when $\gamma\neq 1$ were
previously explored 
in \cite{CKS} and \cite{KS} for the \emph{left} variant; the authors
provided asymptotic results when $\gamma\downarrow 0$.

{\it The goal of this paper is to present a systematic study of
regularization aspects of the Bregman--Moreau envelope.}
Our results extend and complement several classical results 
and provide a novel
way to approximate $\theta$. 
We also obtain new results on the asymptotic behaviour 
when $\gamma\uparrow+\infty$
and on the \emph{right} Bregman--Moreau envelope. 
This opens the door to regularization and smoothing
of functions by employing the right Bregman--Moreau envelope. We
also provide visualizations and examples.

The remainder of this paper is organized as follows.
In Section~\ref{s:zwei}, we collect various useful properties and
characterizations of Bregman--Moreau envelopes. 
In particular,
the minimizers of the envelopes are
also minimizers of the original function
(see \cref{p:min}). 
Section~\ref{s:drei} is devoted to the asymptotic behaviour of
the Bregman--Moreau envelopes when 
$\gamma\downarrow 0$ (\cref{p:12.32}) and when
$\gamma\uparrow +\infty$ (\cref{p:limprox+}). 
Finally, Section~\ref{s:vier} provides examples and comments on
future work.

\section{Basic properties}

\label{s:zwei}

In this section, we collect various useful properties of the
Bregman--Moreau envelopes. 

We start by describing the effect of scaling the function. 

\begin{proposition}
\label{p:12.22i}
Let $\theta \colon X \to \left]-\infty, +\infty\right]$, 
let $\gamma \in \RPP$, and let $\mu \in \RPP$.
Then $\benv[\mu]{\gamma\theta} =\gamma\benv[\gamma\mu]{\theta}$ 
and $\fenv[\mu]{\gamma\theta} =\gamma\fenv[\gamma\mu]{\theta}$.
\end{proposition}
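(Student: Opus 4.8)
The plan is to verify both identities by a direct one-line rearrangement of the defining infima in \eqref{e:bba} and \eqref{e:fba}; the entire content is that a positive scalar commutes with an infimum, so no approximation or limiting argument is needed.

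First I would treat the left envelope. Fixing $y \in X$ and unfolding \eqref{e:bba} for the function $\gamma\theta$ with parameter $\mu$ gives
\[
\benv[\mu]{\gamma\theta}(y) = \inf_{x\in X}\left(\gamma\theta(x) + \frac{1}{\mu}D_f(x,y)\right).
\]
Since $\gamma > 0$, I would factor $\gamma$ out of the infimum, rewriting the objective as $\gamma\bigl(\theta(x) + (\gamma\mu)^{-1}D_f(x,y)\bigr)$; the only arithmetic is $\gamma\cdot(\gamma\mu)^{-1} = \mu^{-1}$, valid because $\gamma,\mu\in\RPP$. Recognizing the inner infimum as \eqref{e:bba} applied to $\theta$ with parameter $\gamma\mu$ then yields $\benv[\mu]{\gamma\theta} = \gamma\benv[\gamma\mu]{\theta}$.

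The right-envelope claim is entirely parallel: I would repeat the same factoring step using \eqref{e:fba}, now infimizing over $y$ with $x$ held fixed and noting that scaling $\theta$ leaves $D_f(x,y)$ untouched. The only point meriting an explicit statement is the positive homogeneity of the infimum, namely $\gamma\inf_x g(x) = \inf_x \gamma g(x)$ for $\gamma > 0$, which is what licenses moving $\gamma$ across the $\inf$; I expect no genuine obstacle. The sole bookkeeping remark is that both envelopes may be extended-real-valued, but since $\gamma > 0$ one has $\gamma\cdot(\pm\infty) = \pm\infty$, so the identities hold throughout $\left[-\infty, +\infty\right]$ without exception.
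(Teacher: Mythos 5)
Your proposal is correct and follows essentially the same route as the paper's own proof: both factor $\gamma$ out of the defining infimum, use $\gamma\cdot(\gamma\mu)^{-1}=\mu^{-1}$, and identify the inner infimum as $\benv[\gamma\mu]{\theta}$ (resp.\ $\fenv[\gamma\mu]{\theta}$), treating the right envelope analogously. Your extra remark on extended-real values is a harmless refinement the paper leaves implicit.
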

\begin{proof}
This is analogous to the proof of \cite[Proposition~12.22(i)]{BC17}.
\end{proof}

We now turn to regularization properties.
(For a variant of \cref{p:12.9}\ref{p:12.9_left},
see \cite[Theorem~2.2 and Proposition~2.1(i)]{KS}.)

\begin{proposition} 
\label{p:12.9} 
Let $\theta \colon X \to \left]-\infty, +\infty\right]$ be such that $U\cap \dom\theta\neq\varnothing$ and let $\gamma \in \RPP$.
Then the following hold:
\begin{enumerate}
\item 
\label{p:12.9_left}
$\dom\benv[\gamma]{\theta}=U$, 
and $(\forall y\in U)(\forall\mu \in \left]\gamma, +\infty\right[)$ 
$\inf \theta(X) \leq \benv[\mu]{\theta}(y) \leq \benv[\gamma]{\theta}(y) \leq \theta(y)$.
Consequently, $\inf \theta(X) \leq \inf \benv[\gamma]{\theta}(X) \leq \inf \theta(U)$, 
and $\benv[\gamma]{\theta}(y) \downarrow \inf \theta(X)$ as $\gamma \uparrow +\infty$.
\item 
\label{p:12.9_right}
$\dom\fenv[\gamma]{\theta}=\dom f$, 
and $(\forall x \in U)(\forall\mu \in \left]\gamma, +\infty\right[)$ 
$\inf \theta(X) \leq \fenv[\mu]{\theta}(x) \leq \fenv[\gamma]{\theta}(x) \leq \theta(x)$.
Consequently, $\inf \theta(X) \leq \inf \fenv[\gamma]{\theta}(X) \leq \inf \theta(U)$, 
and $\fenv[\gamma]{\theta}(x) \downarrow \inf \theta(X)$ as $\gamma \uparrow +\infty$.
\end{enumerate}
\end{proposition}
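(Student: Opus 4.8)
The plan is to prove (i) and (ii) in parallel, since the right envelope $\fenv[\gamma]{\theta}$ is the mirror image of the left envelope $\benv[\gamma]{\theta}$ under interchanging the two arguments of $D_f$, and to reduce both of the ``consequently'' assertions to a single identity, $\inf\theta(U)=\inf\theta(X)$. Throughout I would use only three elementary features of the Bregman distance: $D_f\geq 0$; $D_f(u,u)=0$ for every $u\in U$; and $D_f(\cdot,y)\equiv+\infty$ whenever $y\notin U$, respectively $D_f(x,\cdot)\equiv+\infty$ whenever $x\notin\dom f$.

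First I would settle the effective domains. For the left envelope, if $y\notin U$ then $D_f(x,y)=+\infty$ for all $x$, so $\benv[\gamma]{\theta}(y)=+\infty$; and if $y\in U$, choosing $x_0\in U\cap\dom\theta$ (nonempty by hypothesis) gives $\benv[\gamma]{\theta}(y)\leq\theta(x_0)+\tfrac{1}{\gamma}D_f(x_0,y)<+\infty$. Hence $\dom\benv[\gamma]{\theta}=U$, and $\dom\fenv[\gamma]{\theta}=\dom f$ follows symmetrically, the finiteness now coming from a fixed $y_0\in U\cap\dom\theta$. The inequality chains are then routine: the upper bounds $\benv[\gamma]{\theta}(y)\leq\theta(y)$ and $\fenv[\gamma]{\theta}(x)\leq\theta(x)$ come from evaluating the infimand at $x=y$ and using $D_f(y,y)=0$; monotonicity in the parameter is immediate from $\tfrac{1}{\mu}D_f\leq\tfrac{1}{\gamma}D_f$ when $\mu>\gamma$ (because $D_f\geq0$); and the lower bounds $\inf\theta(X)\leq\benv[\mu]{\theta}(y)$ and $\inf\theta(X)\leq\fenv[\mu]{\theta}(x)$ follow by discarding the nonnegative $D_f$ term and taking the infimum.

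For the consequences I would interchange the two infima. For the right envelope, $\inf\fenv[\gamma]{\theta}(X)=\inf_{y}\big(\theta(y)+\tfrac{1}{\gamma}\inf_{x}D_f(x,y)\big)$; since $\inf_{x}D_f(x,y)=0$ for $y\in U$ and the bracket is $+\infty$ for $y\notin U$, this collapses exactly to $\inf\theta(U)$. The same manipulation for the left envelope yields the two-sided estimate $\inf\theta(X)\leq\inf\benv[\gamma]{\theta}(X)\leq\inf\theta(U)$. The monotone pointwise limits as $\gamma\uparrow+\infty$ then fall out of monotonicity together with the bound $\benv[\gamma]{\theta}(y)\leq\theta(x)+\tfrac{1}{\gamma}D_f(x,y)\to\theta(x)$, valid for each fixed $x\in\dom f$, which squeezes the limit between $\inf\theta(X)$ and $\inf\theta(\dom f)$; the analogous bound handles $\fenv[\gamma]{\theta}$.

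Everything above is unconditional; the crux, and the only real obstacle, is the identity $\inf\theta(U)=\inf\theta(X)$ (which also forces $\inf\theta(\dom f)=\inf\theta(X)$). The inequality ``$\geq$'' is trivial, but ``$\leq$'' requires approaching the global infimum of $\theta$ from inside $U=\intdom f$. Here I would use the line-segment principle: fix $u\in U\cap\dom\theta$ and take any $x\in\dom\theta$; provided $x\in\overline{\dom f}$, the points $x_t:=(1-t)u+tx$ lie in $U\cap\dom\theta$ for all $t\in[0,1)$, so that by convexity $\inf\theta(U)\leq\theta(x_t)\leq(1-t)\theta(u)+t\theta(x)\to\theta(x)$ as $t\uparrow1$; hence $\inf\theta(U)\leq\theta(x)$, and taking the infimum over $x\in\dom\theta$ finishes it. I expect the delicate point to be exactly the requirement $x\in\overline{\dom f}$ for every $x\in\dom\theta$: the identity, and with it both ``consequently'' clauses, can fail if $\dom\theta$ reaches outside $\overline{\dom f}$, so the argument must rest on the compatibility $\dom\theta\subseteq\overline{\dom f}$ between $\theta$ and $f$ --- automatic in the classical case $f=\tfrac{1}{2}\|\cdot\|^2$ --- which I would confirm is in force.
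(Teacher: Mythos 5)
Your unconditional steps are sound and, where they overlap with the paper, essentially identical to it: the two domain identities are proved by the same argument (a point of $U\cap\dom\theta$ gives finiteness, and $D_f(\cdot,y)\equiv+\infty$ off $U$, resp.\ $D_f(x,\cdot)\equiv+\infty$ off $\dom f$, gives the converse), and the inequality chains come from evaluating the infimand at the base point and from $\tfrac{1}{\mu}D_f\leq\tfrac{1}{\gamma}D_f$. Your interchange of infima for the ``consequently'' clauses is a tidier organization than the paper's, and it isolates the crux correctly: everything reduces to $\inf\theta(U)=\inf\theta(X)$. The paper's own proof jumps over exactly this point, twice. It claims that taking the infimum over $y\in U$ in the chain $\inf\theta(X)\leq\benv[\mu]{\theta}(y)\leq\benv[\gamma]{\theta}(y)\leq\theta(y)$ ``yields $\inf\theta(X)=\inf\benv[\gamma]{\theta}(X)$,'' whereas it only yields your sandwich $\inf\theta(X)\leq\inf\benv[\gamma]{\theta}(X)\leq\inf\theta(U)$; and it asserts $(\forall x\in X)$ $\varlimsup_{\gamma\to+\infty}\benv[\gamma]{\theta}(y)\leq\theta(x)$, which is justified only for $x\in\dom f$, since for $x\notin\dom f$ one has $D_f(x,y)=+\infty$ and the majorant $\theta(x)+\tfrac{1}{\gamma}D_f(x,y)$ says nothing.

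Your closing worry is therefore the key issue, but it resolves against the paper: the compatibility $\dom\theta\subseteq\overline{\dom f}$ is \emph{not} in force (the proposition assumes only $U\cap\dom\theta\neq\varnothing$, and it does not even assume $\theta$ convex, which your line-segment step also needs), and without it the two ``consequently'' assertions are false as stated. Concretely, take $J=1$, let $f$ be the Boltzmann--Shannon entropy of Example~\ref{ex:examples}, so $\dom f=\RP$ and $U=\RPP$, and let $\theta\colon x\mapsto x$. All hypotheses hold and $\inf\theta(X)=-\infty$; but the infimand defining $\benv[\gamma]{\theta}(y)$ is $+\infty$ for $x<0$ and is at least $x\geq 0$ for $x\geq 0$, so $\benv[\gamma]{\theta}\geq 0$, whence $\inf\benv[\gamma]{\theta}(X)\geq 0$ and $\lim_{\gamma\uparrow+\infty}\benv[\gamma]{\theta}(y)\geq 0$; the same failure occurs for $\fenv[\gamma]{\theta}$. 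So do not try to ``confirm'' the missing hypothesis --- it is absent, and the statement needs repair. Either (a) add the hypotheses under which your argument closes: $\theta$ convex and $\dom\theta\subseteq\overline{\dom f}$, in which case your segment argument proves $\inf\theta(U)=\inf\theta(X)$ and your proof is complete; or (b) keep the stated hypotheses but replace $\inf\theta(X)$ by $\inf\theta(\dom f)$ in the conclusions: your squeeze already gives the limit claims in that form without any convexity, and for the infimum claims it suffices to know $\inf\theta(U)=\inf\theta(\dom f)$, which your segment argument supplies once $\theta$ is convex. In short, your analysis is the correct one; the paper's proof, read literally, contains the gap you identified.
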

\begin{proof}
\ref{p:12.9_left}: We first show that $\dom\benv[\gamma]{\theta}=U$.
Let $y \in \dom\benv[\gamma]{\theta}$. 
Then $\benv[\gamma]{\theta}(y) =\inf_{x\in X} \big(\theta(x) +\frac{1}{\gamma}D_f(x,y)\big) <+\infty$,
and hence there exists $x \in X$ such that $\theta(x) +\frac{1}{\gamma}D_f(x,y) <+\infty$.
Since $\theta(x) >-\infty$, this yields $y \in U$.

From now on, let $y \in U$, and pick $u \in \dom\theta \cap U$. 
Then $-f(y) <+\infty$, $\|\nabla f(y)\| <+\infty$, $f(u) <+\infty$, $\theta(u) <+\infty$, and
\begin{subequations}
\begin{align}
\benv[\gamma]{\theta}(y) &=\inf_{x\in X} \Big(\theta(x) +\frac{1}{\gamma}\big(f(x) -f(y) -\scal{\nabla f(y)}{x -y}\big)\Big), \\ 
&\leq \theta(u) +\frac{1}{\gamma}\big(f(u) -f(y) -\scal{\nabla f(y)}{u -y}\big) <+\infty,
\end{align}
\end{subequations} 
which gives $y \in \dom\benv[\gamma]{\theta}$. Hence, $\dom \benv[\gamma]{\theta} =U$. 

Next, let $\mu \in \left]\gamma, +\infty\right[$. 
Then $\frac{1}{\mu} < \frac{1}{\gamma}$, 
$\theta \leq \theta +\frac{1}{\mu}D_f(\cdot,y) \leq \theta +\frac{1}{\gamma}D_f(\cdot,y)$, and so
\begin{equation}
\inf_{x \in X} \theta(x) \leq \benv[\mu]{\theta}(y) \leq \benv[\gamma]{\theta}(y) =\inf_{x \in X} \big(\theta(x) +\frac{1}{\gamma}D_f(x,y)\big)
\leq \theta(y) +\frac{1}{\gamma}D_f(y,y) =\theta(y). 
\end{equation}
Therefore,
\begin{equation}
\label{e:170428a}
\inf \theta(X) \leq \benv[\mu]{\theta}(y) \leq \benv[\gamma]{\theta}(y) \leq \theta(y).
\end{equation}
Taking now the infimum over $y\in U$ yields
$\inf \theta(X) \leq \inf \benv[\gamma]{\theta}(X) \leq \inf \theta(U)$.
Consequently, 
\begin{equation}
\inf \theta(X) \leq \varliminf_{\gamma \to+\infty}
\benv[\gamma]{\theta}(y).
\end{equation}
On the other hand, $(\forall x \in X)$ $\benv[\gamma]{\theta}(y) \leq \theta(x) +\frac{1}{\gamma}D_f(x,y)$, 
which implies that $(\forall x \in X)$ $\varlimsup_{\gamma
\to+\infty} \benv[\gamma]{\theta}(y) \leq \theta(x)$ and thus
$\varlimsup_{\gamma \to+\infty} \benv[\gamma]{\theta}(y) \leq
\inf \theta(X)$. 
Altogether,
$\lim_{\gamma\to+\infty} \benv[\gamma]{\theta}(y) =
\inf \theta(X)$ and the conclusion follows from
\eqref{e:170428a}. 

\ref{p:12.9_right}: This is similar to \ref{p:12.9_left}. 
\end{proof}

Denote by $\Gamma_0(X)$ the set of all proper lower semicontinuous convex functions from $X$ to $\left]-\infty, +\infty\right]$. 
From now on, we strengthen our assumptions by requiring that 
\begin{empheq}[box=\mybluebox]{equation}
\label{e:fLegendre}
\text{$f\in\Gamma_0(X)$ is a convex function of Legendre type and
$U := \intdom f$.}
\end{empheq}
This will allow us to obtain a quite satisfying theory in which
the envelopes are convex functions. 
Note that $f$ is essentially smooth and essentially strictly convex in the sense of \cite[Section~26]{Roc70}.
It is well known that
\begin{equation}
\label{e:Leggy}
\nabla f\colon U\to U^*:= \intdom f^*
\text{~is a homeomorphism with~}
\big(\nabla f\big)^{-1} = \nabla f^*.
\end{equation}
We will also work with the following standard assumptions:
\begin{itemize}
\item[\bfseries A1] 
$\nabla^2 f$ exists and is continuous on $U$;
\item[\bfseries A2] 
$D_f$ is \emph{jointly convex}, i.e., convex on $X\times X$;
\item[\bfseries A3] 
$(\forall x\in U)$ $D_f(x,\cdot)$ is strictly convex on $U$;
\item[\bfseries A4] 
$(\forall x\in U)$ $D_f(x,\cdot)$ is coercive, i.e., 
$D_f(x, y) \to +\infty$ as $\|y\| \to +\infty$.
\end{itemize}
We also henceforth assume that 
\begin{empheq}[box=\mybluebox]{equation}
\text{all assumptions \textbf{A1}--\textbf{A4} hold.}
\end{empheq}

\begin{example}[see {\cite[Example~2.16]{BN02}}] 
\label{ex:examples}
Assumptions \eqref{e:fLegendre} and 
\textbf{A1}--\textbf{A4} hold in the following cases, 
where $x=(\xi_j)_{1\leq j\leq J}$ and $y=(\eta_j)_{1\leq j\leq J}$ are 
two generic points in $X =\RR^J$.
\begin{enumerate}
\item 
\emph{Energy:}
If $f\colon x\mapsto\tfrac{1}{2}\|x\|^2$, then $U=X$ and 
\begin{equation}
D_f(x,y) = \tfrac{1}{2}\|x-y\|^2.
\end{equation}
\item  
\label{ex:examples:KL}
\emph{Boltzmann--Shannon\footnote{When dealing with the Boltzmann--Shannon entropy and Fermi--Dirac 
entropy, it is understood that 
$0 \cdot \ln(0) := 0$. 
For two vectors $x$ and $y$ in $X$,
expressions
such as $x \leq y$, $x \cdot y$, and $x/y$
are interpreted coordinate-wise.} entropy:}
If $f\colon x\mapsto\displaystyle\sum_{j=1}^{J}\xi_j\ln(\xi_j)-\xi_j$, 
then $U =\menge{x\in X}{x >0}$
and one obtains the \emph{Kullback--Leibler divergence}
\begin{equation}
D_f(x,y) = \begin{cases}
\textstyle \sum_{j=1}^J \xi_j \ln(\xi_j/\eta_j) - \xi_j + \eta_j, & 
\text{if $x \geq 0$ and $y>0$;}\\
+\infty, & \text{otherwise.}
\end{cases}
\end{equation}
\item 
\emph{Fermi--Dirac entropy:} 
If $f\colon x\mapsto \displaystyle\sum_{j=1}^{J}\xi_j\ln(\xi_j)+(1-\xi_j)\ln(1-\xi_j)$, then $U =\menge{x \in X}{0 <x <1}$ and 
\begin{equation}
\resizebox{.9\hsize}{!}{%
$D_f(x,y) = \begin{cases}
\textstyle\sum_{j=1}^J
\xi_j\ln(\xi_j/\eta_j)+(1-\xi_j)\ln\big((1-\xi_j)/(1-\eta_j)\big),&
\text{if $0 \leq x \leq 1$ and $0 <y <1$;}\\
+\infty, & \text{otherwise.}
\end{cases}$%
}
\end{equation}
\end{enumerate}
\end{example}

The following result relates the Bregman--Moreau envelopes to
Fenchel conjugates.

\begin{proposition}
\label{p:170513a}
Let $\theta\colon X\to\left]-\infty,+\infty\right]$
be such that $U\cap \dom\theta\neq\varnothing$
and let $\gamma\in\RPP$.
Then the following hold\footnote{
Indeed, the proof does not require any of
\textbf{A1}--\textbf{A4}.
}:
\begin{enumerate}
\item
\label{p:170513ai}
$\gamma\benv[\gamma]{\theta} \circ\nabla f^* = 
f^* - (\gamma\theta+f)^*$.
\item
$\gamma\fenv[\gamma]{\theta} = f - (f^*+(\gamma\theta\circ\nabla
f))^*$. 
\label{p:170513aii}
\end{enumerate}
\end{proposition}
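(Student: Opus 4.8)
The plan is to reduce both identities to two facts about the Legendre function $f$: the Fenchel--Young equality, which holds \emph{with equality} at every $y\in U$ in the form $f(y)+f^*(\nabla f(y))=\scal{\nabla f(y)}{y}$, and the homeomorphism property \eqref{e:Leggy}, by which $\nabla f\colon U\to U^*$ is a bijection with inverse $\nabla f^*$. In both parts I multiply the defining infimum by $\gamma$ and expand $D_f(x,y)=f(x)-f(y)-\scal{\nabla f(y)}{x-y}$; since $\benv[\gamma]{\theta}$ and $\fenv[\gamma]{\theta}$ are finite only when the relevant points lie in $U$ (cf.\ \cref{p:12.9}), this expansion is justified. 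As the footnote indicates, none of \textbf{A1}--\textbf{A4} is needed.

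For \ref{p:170513ai}, I fix $y\in U$ and separate the variable of minimization. Since $-f(y)$ and $\scal{\nabla f(y)}{y}$ are constant in $x$, pulling them out gives
\[
\gamma\benv[\gamma]{\theta}(y)=-f(y)+\scal{\nabla f(y)}{y}-\sup_{x\in X}\big(\scal{\nabla f(y)}{x}-(\gamma\theta+f)(x)\big),
\]
and the supremum is precisely $(\gamma\theta+f)^*(\nabla f(y))$. I then compose with $\nabla f^*$, i.e.\ set $y=\nabla f^*(u)$ with $u\in U^*$, so that $\nabla f(y)=u$; the two leading terms collapse by Fenchel--Young to $-f(\nabla f^*(u))+\scal{u}{\nabla f^*(u)}=f^*(u)$, which yields $\gamma\benv[\gamma]{\theta}(\nabla f^*(u))=f^*(u)-(\gamma\theta+f)^*(u)$ for every $u\in U^*$.

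For \ref{p:170513aii}, the variable of minimization is $y$, which \emph{also} appears inside $\nabla f(y)$, so I use Fenchel--Young first rather than last: rewriting $-f(y)+\scal{\nabla f(y)}{y}=f^*(\nabla f(y))$ turns the envelope into
\[
\gamma\fenv[\gamma]{\theta}(x)=f(x)-\sup_{y\in U}\big(\scal{\nabla f(y)}{x}-f^*(\nabla f(y))-\gamma\theta(y)\big).
\]
Now I change variables by $u=\nabla f(y)$, equivalently $y=\nabla f^*(u)$, which by \eqref{e:Leggy} runs bijectively over $U^*$ as $y$ runs over $U$; the supremum becomes $(f^*+\gamma(\theta\circ\nabla f^*))^*(x)$, giving $\gamma\fenv[\gamma]{\theta}=f-(f^*+\gamma(\theta\circ\nabla f^*))^*$. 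I note in passing that the natural composition here is with $\nabla f^*$ (not $\nabla f$), since this is what makes the domains of $f^*$ and $\theta\circ\nabla f^*$ match; one can confirm this by testing $\theta=\iota_{\{a\}}$, for which both sides reduce to $D_f(\cdot,a)$.

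The step I expect to require the most care is not analytic but a matter of effective domains: in both parts I must check that taking the supremum over $U$ (resp.\ $U^*$) agrees with the supremum over all of $X$ entering the conjugate. This is automatic because $\gamma\theta+f$ is $+\infty$ off $U$ (as $f$ is), and $\gamma(\theta\circ\nabla f^*)$ is, by convention, $+\infty$ off $U^*=\dom\nabla f^*$, so no spurious supremizers are introduced; the agreement of both sides on the correct domains ($U^*$ via the composition in \ref{p:170513ai} and $\dom f$ in \ref{p:170513aii}) then follows from \cref{p:12.9}. All the genuine content sits in the Fenchel--Young equality and the bijection \eqref{e:Leggy}; the rest is the bookkeeping of conjugation.
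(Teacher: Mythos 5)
Your proposal is correct. For part \ref{p:170513aii} it follows essentially the paper's own route: expand $D_f$, use the Fenchel--Young equality $f^*(\nabla f(y))=\scal{\nabla f(y)}{y}-f(y)$, and pass to the dual variable $y^*=\nabla f(y)$ via the homeomorphism \eqref{e:Leggy} so that the remaining infimum is recognized as a Fenchel conjugate. The genuine difference is in part \ref{p:170513ai}: the paper does not compute anything there but simply cites \cite[Theorem~2.4]{KS} together with \eqref{e:Leggy}, whereas you give a short self-contained derivation (pull the terms constant in $x$ out of the infimum, identify the supremum as $(\gamma\theta+f)^*(\nabla f(y))$, then substitute $y=\nabla f^*(u)$ and collapse $-f(\nabla f^*(u))+\scal{u}{\nabla f^*(u)}$ to $f^*(u)$ by Fenchel--Young). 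This buys a uniform, citation-free proof of the whole proposition from exactly two ingredients, at the cost of only a few lines, and it makes transparent why the footnote's claim that none of \textbf{A1}--\textbf{A4} is needed is true. Finally, your remark that the composition in \ref{p:170513aii} must be with $\nabla f^*$ rather than $\nabla f$ is confirmed by the paper itself: the last line of the paper's computation reads $f(x)/\gamma-\tfrac{1}{\gamma}\big((\gamma\theta\circ\nabla f^*)+f^*\big)^*(x)$, so the expression ``$\gamma\theta\circ\nabla f$'' in the displayed statement is a typo for ``$\gamma\theta\circ\nabla f^*$''; your test case $\theta=\iota_{\{a\}}$, for which both sides equal $D_f(\cdot,a)$ only with the $\nabla f^*$ version, is a clean way of settling the point.
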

\begin{proof}
\ref{p:170513ai}: This follows from \cite[Theorem~2.4]{KS} and \eqref{e:Leggy}.
Note also that the case in which $\gamma=1$ is related to \cite[Theorem~1(i)]{CR13} applied to $(f^*, \theta^*)$ instead of $(f, \theta)$.
 
\ref{p:170513aii}:
Let $x\in X$.
Using the fact that $f^*(\nabla f(y)) =\scal{\nabla f(y)}{y} -f(y)$ (see, e.g., \cite[Theorem~23.5]{Roc70})
and that $\big(\nabla f\big)^{-1} =\nabla f^*$ (see \eqref{e:Leggy}), we obtain
\begin{subequations}
\begin{align}
\fenv[\gamma]{\theta}(x) 
&= \inf_{y\in X} \Big( \theta(y)+\frac{1}{\gamma}
\big(f(x)-f(y)-\scal{\nabla f(y)}{x-y}\big) \Big)\\
&= \frac{f(x)}{\gamma} + 
\frac{1}{\gamma}\inf_{y\in U}\big(\gamma\theta(y) + 
f^*(\nabla f(y))-\scal{\nabla f(y)}{x}\big)\\
&= \frac{f(x)}{\gamma} + 
\frac{1}{\gamma}\inf_{y^*\in U^*}\big(\gamma\theta(\nabla
f^*(y^*))+ 
f^*(y^*)-\scal{y^*}{x}\big)\\
&= \frac{f(x)}{\gamma} - 
\frac{1}{\gamma}\sup_{y^*\in X}
\big(\scal{x}{y^*} - \big((\gamma\theta\circ\nabla
f^*)+f^*\big)(y^*) \big)\\
&= \frac{f(x)}{\gamma} - 
\frac{1}{\gamma}
\big((\gamma\theta\circ\nabla f^*)+f^*\big)^*(x).
\end{align}
\end{subequations}
This completes the proof. 
\end{proof}

In what follows, we shall require the following two facts. 

\begin{fact} 
\label{f:marco}
The following hold:
\begin{enumerate} 
\item \label{f:marcoi}
$(\forall x\in X)(\forall y\in U)\;\;D_f(x,y)=0\;\;\Leftrightarrow\;\;x=y$.
\item \label{f:marcoii}
$(\forall y\in U)$ $D_f(\cdot,y)$ is coercive, 
i.e., $D_f(x, y) \to +\infty$ as $\|x\| \to +\infty$.
\end{enumerate}
\end{fact}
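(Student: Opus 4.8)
The plan is to treat the two statements separately, exploiting the Legendre structure of $f$ in each. For \ref{f:marcoi}, the implication $x=y\Rightarrow D_f(x,y)=0$ is immediate from \eqref{e:Df}. For the converse, fix $y\in U$ and set $h:=D_f(\cdot,y)$. As a function of its first argument, $D_f(\cdot,y)=f+\text{(affine)}$ is convex, and the subgradient inequality (with $\nabla f(y)\in\partial f(y)$) gives $h\geq 0$ and $h(y)=0$, so $y$ minimizes $h$. Suppose now that $D_f(x,y)=0$ with $x\neq y$ (necessarily $x\in\dom f$, otherwise $D_f(x,y)=+\infty$). I would consider the convex function $\phi\colon t\mapsto h\big(y+t(x-y)\big)$ on $[0,1]$: it satisfies $\phi\geq 0$ and $\phi(0)=\phi(1)=0$, so convexity forces $\phi\equiv 0$. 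Since $y\in\intdom f$, I can then choose $t_0\in\,]0,1]$ small enough that $z:=y+t_0(x-y)\in U$; the vanishing of $\phi$ on $[0,t_0]$ says exactly that $f$ coincides with its linearization at $y$ along the whole segment $[y,z]\subseteq U$, i.e.\ $f$ is affine on the nondegenerate segment $[y,z]\subseteq U$.

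The main obstacle is this last step, and here the Legendre hypothesis is essential: because $f$ is of Legendre type it is essentially strictly convex, hence strictly convex on $U$ (see \cite[Section~26]{Roc70}), which precludes $f$ from being affine on a nondegenerate segment contained in $U$; the resulting contradiction yields $x=y$. The subtlety worth highlighting is that $x$ itself may lie on $\bd\dom f$, so strict convexity cannot be applied to the full segment $[x,y]$ directly; restricting to the subsegment $[y,z]\subseteq U$ anchored at the interior point $y$ is precisely what circumvents this.

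For \ref{f:marcoii}, fix $y\in U$ and write $y^*:=\nabla f(y)$, so that $D_f(x,y)=f(x)-\scal{y^*}{x}+c$ with $c:=\scal{y^*}{y}-f(y)$ a constant. By \eqref{e:Leggy}, $y^*\in U^*=\intdom f^*$, so $f^*$ is finite and continuous, hence bounded above by some $M$ on a ball $\ball{y^*}{\rho}$ with $\rho>0$. Fenchel--Young then gives, for every $x\in X$,
\begin{equation}
f(x)\geq\sup_{z\in\ball{y^*}{\rho}}\big(\scal{x}{z}-f^*(z)\big)\geq\scal{x}{y^*}+\rho\|x\|-M,
\end{equation}
whence $D_f(x,y)\geq\rho\|x\|-M+c\to+\infty$ as $\|x\|\to+\infty$, which is the desired coercivity. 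This part is essentially a duality computation, the only nontrivial ingredient being that $y^*\in\intdom f^*$ (so that $f^*$ is locally bounded there), which is exactly what the Legendre property supplies through \eqref{e:Leggy}.
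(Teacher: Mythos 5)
Your proof is correct. Relative to the paper you have taken a genuinely different route: the paper does not argue either claim at all, but simply cites \cite[Theorem~3.7(iii)--(iv)]{BB97}, whereas you give a self-contained proof that essentially reconstructs the substance of that reference. For \ref{f:marcoi}, your one-dimensional restriction $\phi\colon t\mapsto D_f(y+t(x-y),y)$, forced to vanish identically on $[0,1]$ by convexity together with $\phi\geq 0$ and $\phi(0)=\phi(1)=0$, combined with strict convexity of $f$ on $U$ (essential strict convexity of the Legendre function $f$ on the convex set $U\subseteq\dom\partial f$, \cite[Section~26]{Roc70}), is the standard device; your observation that $x$ may lie on $\bd\dom f$, so that strict convexity cannot be applied to the full segment $[x,y]$ and one must retreat to a nondegenerate subsegment $[y,z]\subseteq U$ anchored at the interior point $y$, is exactly the point a careless argument would miss, and since $U$ is open and convex the subsegment is indeed contained in $U$. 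For \ref{f:marcoii}, your computation is the classical duality fact that $y^*\in\intdom f^*$ forces $f-\scal{\cdot}{y^*}$ to be coercive: local boundedness of the proper convex function $f^*$ on a ball $\ball{y^*}{\rho}\subseteq\intdom f^*$ plus Fenchel--Young yields the linear-growth minorant $f(x)\geq\scal{x}{y^*}+\rho\|x\|-M$, with $y^*=\nabla f(y)\in U^*=\intdom f^*$ supplied by \eqref{e:Leggy}. One cosmetic simplification: you do not need $f=f^{**}$ here, since $f(x)\geq\scal{x}{z}-f^*(z)$ is immediate from the definition of $f^*$. What the citation buys the paper is brevity; what your argument buys is a transparent finite-dimensional proof using only convexity, the Legendre property, and elementary conjugacy.
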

\begin{proof}
\ref{f:marcoi}: See \cite[Theorem~3.7.(iv)]{BB97}.
\ref{f:marcoii}: See \cite[Theorem~3.7.(iii)]{BB97}.
\end{proof}

\begin{fact} 
\label{f:coeR}
Let $\theta\in\Gamma_0(X)$ be such that $\dom\theta\cap U\neq\varnothing$ and let $\gamma \in \RPP$.
Consider the following properties:
\begin{itemize}
\item[\rm (a)]
$U\cap \dom\theta$ is bounded.
\item[\rm (b)]
$\inf\theta(U)>-\infty$.
\item[\rm (c)]
$f$ is supercoercive, i.e., 
$f(x)/\|x\| \to +\infty$ as $\|x\| \to +\infty$.
\item[\rm (d)]
$(\forall x\in U)$ $D_f(x,\cdot)$ is supercoercive.
\end{itemize}
Then the following hold:
\begin{enumerate}
\item \label{l:coeRi}
If any of the conditions (a), (b), or (c) holds, then 
\begin{equation} \label{e:bcoeR}
(\forall y \in U) \quad \theta(\cdot) + \frac{1}{\gamma}D_f(\cdot,y) \;\;
\text{is coercive}
\end{equation}
or, equivalently,
\begin{equation}
\frac{1}{\gamma}\ran\nabla f \subseteq \intdom\left(\frac{1}{\gamma}f +\theta\right)^*.
\label{e:brilliantissimo}
\end{equation}

\item \label{l:coeRii}
If any of the conditions (a), (b), or (d) holds, then 
\begin{equation} \label{e:fcoeR}
(\forall x \in U) \quad \theta(\cdot) + \frac{1}{\gamma}D_f(x,\cdot) \;\;
\text{is coercive.}
\end{equation}
\end{enumerate}
\end{fact}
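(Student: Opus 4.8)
The overall strategy is to reduce each coercivity claim to separate control of the two summands $\theta$ and $\frac{1}{\gamma}D_f$. For the \emph{right} (second-slot) statement in \ref{l:coeRii} this is clean: for fixed $x\in U$, the map $z\mapsto\theta(z)+\frac{1}{\gamma}D_f(x,z)$ is finite only on $U\cap\dom\theta$, since $D_f(x,z)=+\infty$ whenever $z\notin U$. For the \emph{left} (first-slot) statement in \ref{l:coeRi} the effective domain is instead $\dom\theta\cap\dom f$, which may reach all the way to $\bd\dom f$; propagating the hypotheses from $U$ to $\dom f$ will be the crux. I would therefore dispatch \ref{l:coeRii} first and then handle \ref{l:coeRi}.

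For \ref{l:coeRii}, fix $x\in U$. Under (d), $D_f(x,\cdot)$ is supercoercive; since $\theta\in\Gamma_0(X)$ has an affine minorant, the sum dominates a supercoercive function minus a continuous linear term and is hence coercive. Under (b), on the domain $U$ of $D_f(x,\cdot)$ we have $\theta\geq\inf\theta(U)>-\infty$, while $D_f(x,\cdot)$ is coercive by \textbf{A4}, so $\theta+\frac1\gamma D_f(x,\cdot)\geq \inf\theta(U)+\frac1\gamma D_f(x,\cdot)\to+\infty$. Under (a), the domain $U\cap\dom\theta$ is bounded, so the sum is $+\infty$ off a bounded set and is coercive; properness follows from $U\cap\dom\theta\neq\varnothing$.

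For \ref{l:coeRi}, fix $y\in U$ and use \eqref{e:Df} to write $\theta(x)+\frac1\gamma D_f(x,y)=\big(\theta+\frac1\gamma f\big)(x)-\scal{\frac1\gamma\nabla f(y)}{x}+c(y)$, where $c(y)$ is constant in $x$; coercivity is unaffected by that constant. Under (c), $f$ is supercoercive, so $\theta+\frac1\gamma f$ is supercoercive and dominates the linear perturbation, giving coercivity. For (a) and (b) the obstacle is that the domain now includes $\bd\dom f$, so I would run the \emph{segment argument}: for $u\in U\cap\dom\theta$ and $x\in\dom f$, the points $x_t:=(1-t)u+tx$ lie in $U$ for $t\in[0,1[$ and satisfy $x_t\to x$. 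Convexity gives $\inf\theta(U)\leq\theta(x_t)\leq(1-t)\theta(u)+t\theta(x)$; letting $t\uparrow 1$ yields $\theta(x)\geq\inf\theta(U)$. Thus (b) forces $\theta$ bounded below on all of $\dom f$, whence the sum dominates $\inf\theta(U)+\frac1\gamma D_f(\cdot,y)$, coercive by \cref{f:marco}\ref{f:marcoii}; and if $U\cap\dom\theta$ is bounded then $\|x_t\|$ is uniformly bounded, so $\|x\|\leq\sup_{U\cap\dom\theta}\|\cdot\|$, making $\dom\theta\cap\dom f$ bounded and the sum coercive.

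For the reformulation \eqref{e:brilliantissimo}, I would invoke the standard duality fact that for $h\in\Gamma_0(X)$ and $v\in X$ the function $h-\scal{v}{\cdot}$ is coercive if and only if $v\in\intdom h^*$ (see \cite{BC17}), using $(h-\scal v\cdot)^*=h^*(\cdot+v)$. Applying this with $h=\frac1\gamma f+\theta\in\Gamma_0(X)$ and $v=\frac1\gamma\nabla f(y)$, and recalling the decomposition above, coercivity for a fixed $y\in U$ is equivalent to $\frac1\gamma\nabla f(y)\in\intdom\big(\frac1\gamma f+\theta\big)^*$; as $y$ ranges over $U$, $\nabla f(y)$ ranges over $\ran\nabla f$, so the full family of coercivity statements in \eqref{e:bcoeR} is equivalent to \eqref{e:brilliantissimo}. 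I expect the one genuine obstacle to be the boundary transfer in \ref{l:coeRi} under (a) and (b): one must verify carefully that $x_t\in U\cap\dom\theta$ and $x_t\to x$, so that boundedness and lower-boundedness really pass from $U$ to $\dom f$.
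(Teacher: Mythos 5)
Your argument is correct, but it takes a genuinely different route from the paper, which does not argue directly at all: the paper's entire proof is the observation that $\tfrac{1}{\gamma}D_f = D_{\frac{1}{\gamma}f}$ followed by a citation of \cite[Lemma~2.12]{BCN06} applied to $\tfrac{1}{\gamma}f$, so the whole Fact (including the equivalence of \eqref{e:bcoeR} with \eqref{e:brilliantissimo}) is imported in one line after a scaling reduction to the normalized case $\gamma=1$. You instead reprove the content of that lemma from scratch, and your case analysis holds up: for \eqref{e:fcoeR} the effective domain is $U\cap\dom\theta$, so (a) is immediate, (b) follows from \textbf{A4} together with $\theta\geq\inf\theta(U)$ on $U$, and (d) follows from an affine minorant of $\theta$ (\cite[Theorem~9.20]{BC17}); for \eqref{e:bcoeR} your decomposition $\theta+\tfrac{1}{\gamma}D_f(\cdot,y)=\big(\theta+\tfrac{1}{\gamma}f\big)-\scal{\tfrac{1}{\gamma}\nabla f(y)}{\cdot}+c(y)$ settles (c), while the line-segment principle ($u\in U=\intdom f$ and $x\in\dom f$ imply $(1-t)u+tx\in U$ for $t\in\left[0,1\right[$) correctly transfers the hypotheses from $U$ to $\dom f$: under (b) it yields $\theta\geq\inf\theta(U)$ on all of $\dom f$, so that \cref{f:marco}\ref{f:marcoii} finishes, and under (a) it yields boundedness of $\dom\theta\cap\dom f$. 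Your justification of the equivalence with \eqref{e:brilliantissimo}, via $(h-\scal{v}{\cdot})^*=h^*(\cdot+v)$ and the finite-dimensional criterion that $h\in\Gamma_0(X)$ is coercive if and only if $0\in\intdom h^*$ (\cite{Roc70}, \cite{BC17}), is also sound, and it is the only place in either text where that reformulation is actually proved rather than cited. What the paper's route buys is brevity and the clean scaling trick; what yours buys is self-containedness and transparency about exactly where each hypothesis enters --- \textbf{A4} for the second slot, the Legendre coercivity of $D_f(\cdot,y)$ from \cref{f:marco}\ref{f:marcoii} for the first, the affine minorant for (c)/(d), and the line-segment principle for pushing (a) and (b) out to the boundary of $\dom f$.
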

\begin{proof}
Since $\frac{1}{\gamma}D_f =D_{\frac{1}{\gamma}f}$,
the result follows from \cite[Lemma~2.12]{BCN06} 
applied to $\tfrac{1}{\gamma}f$.
\end{proof}

The definition of proximal mappings relies on the following
result. 
(For variants of \cref{p:exact}\ref{p:exact_left},
see \cite[Theorems~2.2 and 4.3]{KS}.)

\begin{proposition} 
\label{p:exact} 
Let $\theta \colon X \to \left]-\infty, +\infty\right]$ be convex
and such that $U\cap \dom\theta\neq\varnothing$, and let $\gamma \in \RPP$.
Then the following hold:
\begin{enumerate}
\item 
\label{p:exact_left}
$\benv[\gamma]{\theta}$ is convex and continuous on $U$, and
	\begin{enumerate}
	\item 
	if \eqref{e:bcoeR} holds, i.e., $(\forall y \in U)$ $\theta(\cdot) +\frac{1}{\gamma}D_f(\cdot,y)$ is coercive, 
	then $\benv[\gamma]{\theta}$ is proper; 
	\item
	if $\theta\in\Gamma_0(X)$ and $\theta(\cdot) +\frac{1}{\gamma}D_f(\cdot,y)$ is coercive for a given $y\in U$, 
	then there exists a unique point $z\in U$ such that $\benv[\gamma]{\theta}(y)=\theta(z) +\frac{1}{\gamma}D_f(z,y)$.
	\end{enumerate}

\item 
\label{p:exact_right} 
$\fenv[\gamma]{\theta}$ is convex and continuous on $U$, and
	\begin{enumerate}
	\item 
	if \eqref{e:fcoeR} holds, i.e., $(\forall x \in U)$ $\theta(\cdot) +\frac{1}{\gamma}D_f(x,\cdot)$ is coercive, 
	then $\fenv[\gamma]{\theta}$ is proper; 
	\item
	if $\theta\in\Gamma_0(X)$ and $\theta(\cdot) +\frac{1}{\gamma}D_f(x,\cdot)$ is coercive for a given $x\in U$, 
	then there exists a unique point $z\in U$ such that $\fenv[\gamma]{\theta}(z)=\theta(z) +\frac{1}{\gamma}D_f(x,z)$.
	\end{enumerate}
\end{enumerate}
\end{proposition}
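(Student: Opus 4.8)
The plan is to establish the convexity and continuity claims first, then handle the properness and existence/uniqueness statements, treating the left envelope $\benv[\gamma]{\theta}$ and noting that the right envelope is analogous.

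First I would address convexity. For the left envelope, I would observe that $\benv[\gamma]{\theta}(y) = \inf_{x\in X} \theta(x) + \tfrac{1}{\gamma}D_f(x,y)$ is the infimal projection (marginal function) of the function $(x,y) \mapsto \theta(x) + \tfrac{1}{\gamma}D_f(x,y)$ onto the $y$-variable. Since $\theta$ is convex and, by assumption \textbf{A2}, $D_f$ is jointly convex on $X\times X$, this combined function is jointly convex; the infimal projection of a jointly convex function is convex (see, e.g., \cite[Proposition~8.35]{BC17}). Hence $\benv[\gamma]{\theta}$ is convex. For the right envelope I would argue similarly, using joint convexity of $D_f$ again. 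Continuity on $U$ then follows because a convex function is automatically continuous on the interior of its domain, and \cref{p:12.9} already gives $\dom\benv[\gamma]{\theta}=U$ (so $U = \inte\dom\benv[\gamma]{\theta}$ since $U$ is open); for the right envelope $\dom\fenv[\gamma]{\theta}=\dom f$ with $U = \intdom f$ being the relevant interior.

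Next I would treat properness, i.e.\ part (a) of each item. A convex function fails to be proper only if it takes the value $-\infty$ somewhere. By \cref{p:12.9}, the domains are already identified as $U$ (resp.\ $\dom f$), so I only need to rule out the value $-\infty$. If \eqref{e:bcoeR} holds, then for each $y\in U$ the function $x\mapsto \theta(x)+\tfrac{1}{\gamma}D_f(x,y)$ is coercive; being also bounded below on compact sets (it is convex, hence lower semicontinuous being a sum of a convex $\theta$ and the finite-valued-on-$U$ Bregman term), a coercive convex function attains a finite infimum, so $\benv[\gamma]{\theta}(y) > -\infty$. Combined with the finiteness on $U$ from \cref{p:12.9}, this shows $\benv[\gamma]{\theta}$ is proper. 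The right case is identical using \eqref{e:fcoeR}.

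Finally, for the existence and uniqueness in part (b), I would fix $y\in U$, assume $\theta\in\Gamma_0(X)$, and consider $g := \theta(\cdot)+\tfrac{1}{\gamma}D_f(\cdot,y)$, which is lower semicontinuous (as $\theta\in\Gamma_0(X)$ and $D_f(\cdot,y)$ is lsc), convex, proper (since $U\cap\dom\theta\neq\varnothing$), and coercive by hypothesis. A proper lsc convex coercive function on $\RR^J$ attains its minimum, giving existence of a minimizer $z$; by \cref{f:marcoi} the Bregman term forces $z\in U$. For uniqueness, assumption \textbf{A3} gives that $D_f(\cdot,y)$ — wait, here I need strict convexity in the \emph{first} argument; for the left envelope the relevant strictness comes from essential strict convexity of $f$, which makes $D_f(\cdot,y)$ strictly convex on $U$, so $g$ is strictly convex and the minimizer is unique. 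For the right envelope the minimization is in the second argument, where \textbf{A3} directly supplies strict convexity of $D_f(x,\cdot)$, again yielding uniqueness. The main obstacle I anticipate is pinning down which strict convexity assumption applies in each case — left versus right distinguishes whether we need strictness in the first or second argument of $D_f$ — and confirming that the minimizer lands in $U$ rather than merely in $\dom f$, which is exactly where \cref{f:marco} and the Legendre structure of $f$ do the work.
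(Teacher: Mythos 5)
Your route is genuinely different from the paper's: the paper proves \cref{p:exact} in one line, observing that $\tfrac{1}{\gamma}D_f = D_{\frac{1}{\gamma}f}$ and citing \cite[Propositions~3.4\&3.5]{BCN06} applied to the Legendre function $\tfrac{1}{\gamma}f$, whereas you attempt a self-contained proof. Your skeleton (convexity via infimal projection and \textbf{A2}, properness via coercivity, existence via coercivity, uniqueness via strict convexity) is sound, but there is a genuine gap at the crux of part (b) for the left envelope: showing that the minimizer $z$ of $g:=\theta+\tfrac{1}{\gamma}D_f(\cdot,y)$ lies in $U$. A priori $z$ only lies in $\dom\theta\cap\dom f$ and could sit on $\bd\dom f$. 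You invoke \cref{f:marco}\ref{f:marcoi}, but that fact ($D_f(x,y)=0\Leftrightarrow x=y$ for $y\in U$) says nothing about the location of minimizers, and your closing sentence concedes the point by deferring to ``the Legendre structure of $f$'' without an argument. That missing argument is exactly what Legendre-ness buys: since $U\cap\dom\theta\neq\varnothing$ supplies the qualification condition for the subdifferential sum rule, Fermat's rule gives $0\in\partial\theta(z)+\tfrac{1}{\gamma}\bigl(\partial f(z)-\nabla f(y)\bigr)$, while essential smoothness of $f$ yields $\partial f(z)=\varnothing$ for every $z\in\dom f\smallsetminus U$ (\cite[Theorem~26.1]{Roc70}); hence no boundary point can minimize $g$. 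Your uniqueness claim also depends on this step, because strict convexity of $D_f(\cdot,y)$ is only available on $U$, so one must first know that \emph{all} minimizers are interior before comparing two of them.

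A second, smaller but real, error occurs in your properness argument for part (a): you assert that the objective is ``convex, hence lower semicontinuous.'' That implication is false, and it matters here because in part (a) $\theta$ is only assumed convex, not in $\Gamma_0(X)$. The conclusion $\benv[\gamma]{\theta}(y)>-\infty$ survives, but by a different mechanism: $g$ is proper and convex, hence admits an affine minorant $\scal{\cdot}{u}+\eta$, and coercivity confines the set where $g$ is small to a bounded region, on which the affine minorant is bounded below; attainment of the infimum (which your phrase ``attains a finite infimum'' smuggles in) is neither available without lower semicontinuity nor needed for properness. Finally, when you declare the right-hand statements ``analogous,'' note the asymmetry: for the right envelope any minimizer lies in $U$ automatically because $D_f(x,\cdot)=+\infty$ off $U$ by \eqref{e:Df}, but attainment then requires lower semicontinuity of $D_f(x,\cdot)$ at $\bd U$ --- again a boundary property of Legendre functions, not something obtained by mirroring the left-hand proof.
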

\begin{proof}
Since $\frac{1}{\gamma}D_f =D_{\frac{1}{\gamma}f}$,
the result follows from
 \cite[Propositions~3.4 and 3.5]{BCN06}
applied to $\tfrac{1}{\gamma}f$.
\end{proof}

In view of \cref{p:exact}, we define the following operators on $U$; 
see also \cite[Definition~3.7]{BCN06}.

\begin{definition}[Bregman proximity operators] 
\label{d:prox}
Let $\theta\in\Gamma_0(X)$ be such that 
$U\cap \dom\theta\neq\varnothing$.
If \eqref{e:bcoeR} holds for $\gamma =1$, then 
the \emph{left proximity operator} associated with $\theta$ is
\begin{equation} \label{e:dbprox}
\bprox{\theta} \colon U \to U \colon
y\mapsto
\underset{x\in X}{\operatorname{argmin}}\;\: \big(\theta(x) +D_f(x,y)\big).
\end{equation}
If \eqref{e:fcoeR} holds for $\gamma =1$, then 
the \emph{right proximity operator} associated with $\theta$ is
\begin{equation} \label{e:dfprox}
\fprox{\theta}\colon U \to  U \colon
x \mapsto
\underset{y\in X}{\operatorname{argmin}}\;\: \big(\theta(y) +D_f(x,y)\big).
\end{equation}
\end{definition}

\begin{remark}
\label{r:energy}
Suppose that $f =\frac{1}{2}\|\cdot\|^2$ and let $\theta\in\Gamma_0(X)$. 
Then $U =\intdom f =X$ and hence 
$U\cap \dom\theta =\dom\theta \neq\varnothing$.
Since $f(x)/\|x\| =\frac{1}{2}\|x\| \to +\infty$ as $\|x\| \to +\infty$, \cref{f:coeR} implies that
\eqref{e:bcoeR} and \eqref{e:fcoeR} hold for all $\gamma \in \RPP$. 
In this case, $D_f\colon (x, y)\mapsto \frac{1}{2}\|x -y\|^2$ 
and $\bprox{\theta} =\fprox{\theta} =\prox_\theta$ is the classical \emph{Moreau proximity operator} of $\theta$ \cite{Mor62}.
\end{remark}

Given a closed convex subset $C$ of $X$ with $C\cap U \neq\varnothing$, we have that $\iota_C \in \Gamma_0(X)$, $\dom\iota_C =C$, 
and hence $U\cap \dom\iota_C =U\cap C \neq\varnothing$ and also $\inf \iota_C(U) =0 >-\infty$, 
which together with \cref{f:coeR} imply that \eqref{e:bcoeR} and \eqref{e:fcoeR} hold for all $\gamma \in \RPP$. 
This leads to the following definition. 

\begin{definition}[Bregman projectors]
\label{d:BregProj}
Let $C$ be a closed convex subset of $X$ such that 
$U\cap C\neq\varnothing$. 
Then $\bproj{C} :=\bprox{\iota_C}$ is the \emph{left Bregman projector} onto $C$
and $\fproj{C} :=\fprox{\iota_C}$ is the \emph{right Bregman projector} onto $C$.
\end{definition}

\begin{remark}
\label{r:BregProj}
In view of \cref{r:energy}, if $f =\frac{1}{2}\|\cdot\|^2$, 
then $\bproj{C} =\fproj{C} =\proj{C}$ is the \emph{orthogonal projector} onto $C$. 
Note that $\bproj{C}$, $\fproj{C}$, and $\proj{C}$ are not, in general, the same when $f \ne \frac12 \|\cdot\|^2$.
Before we give a corresponding example, let us 
show that these projectors are the same when $X=\RR$.
\end{remark}

\begin{proposition}
\label{p:allproj}
Suppose that $X =\RR$ and let $C$ be a closed convex subset of $\RR$
such that $U\cap C \neq\varnothing$. 
Then $\bproj{C} =\fproj{C} =\proj{C}$ on $U$.
\end{proposition}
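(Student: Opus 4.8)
The plan is to exploit the one-dimensional structure. A closed convex subset of $\RR$ is a (possibly unbounded) closed interval, and the Euclidean projector $\proj{C}$ onto such an interval is simply the nearest-point clamp. Recall from \cref{d:BregProj} and \eqref{e:dbprox}--\eqref{e:dfprox} that $\bproj{C}(y)=\operatorname{argmin}_{x\in C}D_f(x,y)$ and $\fproj{C}(x)=\operatorname{argmin}_{y\in C}D_f(x,y)$, so both Bregman projectors are themselves nearest-point problems for the discrepancy $D_f$. First I would record the elementary one-dimensional fact that if $\varphi\colon\RR\to\left]-\infty,+\infty\right]$ is convex and attains a unique global minimum at a point $p$, then for every closed interval $C$ one has $\operatorname{argmin}_{t\in C}\varphi(t)=\proj{C}(p)$. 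Indeed, a convex function is nonincreasing on $\left]-\infty,p\right]$ and nondecreasing on $\left[p,+\infty\right[$; hence if $p\in C$ the minimizer is $p=\proj{C}(p)$, while if $p$ lies strictly to the left (resp.\ right) of $C$ the restriction of $\varphi$ to $C$ is nondecreasing (resp.\ nonincreasing) and the minimizer is the nearer endpoint, which is again $\proj{C}(p)$.

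Next I would apply this fact to each projector separately. For the left projector, fix $y\in U$ and set $\varphi:=D_f(\cdot,y)=f-f(y)-\scal{\nabla f(y)}{\cdot-y}$. Since $f$ is of Legendre type, $\varphi$ is convex, and by \cref{f:marco}\ref{f:marcoi} together with $D_f\geq 0$ it attains its unique global minimum at $x=y$. The fact then yields $\bproj{C}(y)=\operatorname{argmin}_{x\in C}\varphi(x)=\proj{C}(y)$. For the right projector, fix $x\in U$ and set $\psi:=D_f(x,\cdot)$; by \textbf{A3} the function $\psi$ is strictly convex on $U$, and again by \cref{f:marco}\ref{f:marcoi} its unique global minimum is at $y=x$, so that $\fproj{C}(x)=\operatorname{argmin}_{y\in C}\psi(y)=\proj{C}(x)$.

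To make these identifications rigorous I would check that the argmins are genuinely attained in $U$, so that the two Bregman projectors (which by \cref{d:BregProj} map $U$ into $U$) indeed coincide with $\proj{C}$ there. This reduces to verifying that $\proj{C}(t)\in U$ whenever $t\in U$: writing $C=[a,b]$ with $a\in[-\infty,+\infty[$ and $b\in]-\infty,+\infty]$, if $t\in C$ then $\proj{C}(t)=t\in U$, whereas if $t<a$ then $a$ is finite and $c<t<a$ (where $U=\,]c,d[$) together with $C\cap U\neq\varnothing$ forces $c<a<d$, i.e.\ $a=\proj{C}(t)\in U$; the case $t>b$ is symmetric. Moreover $D_f(\cdot,y)$ and $D_f(x,\cdot)$ take the value $+\infty$ off $\dom f$, so they never compete with the finite minimum attained in $U$, and restricting the minimization to $C$ or to $C\cap\dom f$ makes no difference. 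Combining the two identities then gives $\bproj{C}=\fproj{C}=\proj{C}$ on $U$.

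I expect the only real work to lie in this last paragraph — the endpoint and unboundedness bookkeeping and the verification that the clamp lands in $U$ — since the reduction of each Bregman minimization to a nearest-point problem is immediate once one observes that all three relevant objectives are convex in the free variable with their minimum on the diagonal $\{x=y\}$, which is exactly the content of \cref{f:marco}\ref{f:marcoi}.
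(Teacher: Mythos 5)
Your argument is correct, and it establishes the same key intermediate claim as the paper --- namely that the Euclidean clamp minimizes $D_f$ over $C$, after which the uniqueness of the Bregman argmin built into \cref{d:BregProj} forces it to coincide with the Bregman projector --- but you reach that claim by a different elementary mechanism. You prove a standalone unimodality lemma (a one-dimensional convex function with unique global minimizer $p$ is nonincreasing up to $p$ and nondecreasing afterwards, so its minimizer over a closed interval is $\proj{C}(p)$), which entails the three-case analysis and, as you yourself flag, the genuine work of checking that the clamp lands in $U$ and that the $+\infty$ values off $\dom f$ are harmless. The paper instead uses a convex-combination trick that collapses all of this: since $X=\RR$, for every $z\in C$ there exists $\lambda_z\in\left[0,1\right]$ with $\proj{C}y=\lambda_z z+(1-\lambda_z)y$, so convexity, nonnegativity, and $D_f(y,y)=0$ give $D_f(\proj{C}y,y)\leq\lambda_z D_f(z,y)+(1-\lambda_z)D_f(y,y)\leq D_f(z,y)$ for every $z\in C$; no case distinctions are needed, and the fact that $\proj{C}y\in U$ falls out a posteriori from $\proj{C}y=\bproj{C}y$ rather than having to be verified in advance. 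Your use of \cref{f:marco}\ref{f:marcoi} to place the unique minimizer on the diagonal is exactly the paper's use of it, just in the guise of ``nonnegative with $D_f(y,y)=0$''; what the paper's packaging buys is precisely the elimination of the endpoint/unboundedness bookkeeping that constitutes the bulk of your third paragraph. One small citation remark: for $\fproj{C}$ you invoke \textbf{A3}, which does suffice (convexity of $D_f(x,\cdot)$ on $U$ plus the value $+\infty$ off the convex set $U$ yields convexity on all of $\RR$), but \textbf{A2} is the more natural hypothesis to cite for that convexity, and it is what the paper's ``similar'' argument for the right projector implicitly relies on as well.
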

\begin{proof}
Let $y \in U$. 
Because $X =\RR$, $(\forall z \in C)$
$(\exists\, \lambda_z \in \left[0, 1\right])$ 
$\proj{C}y =\lambda_z z +(1 -\lambda_z)y$.
Since $D_f(\cdot, y)$ is convex, nonnegative, and $D_f(y, y) =0$,
it follows that 
\begin{equation}
(\forall z \in C)\quad D_f(\proj{C}y, y) \leq \lambda_z D_f(z, y) +(1 -\lambda_z)D_f(y, y) 
=\lambda_z D_f(z, y) \leq D_f(z, y). 
\end{equation}
This combined with \cref{d:BregProj} yields
$\bproj{C}(y) =\proj{C}(y)$.
The proof that $\fproj{C} =\proj{C}$ is similar.
\end{proof}

\begin{example}
\label{ex:proj}
Here we illustrate how Bregman projectors may differ from the orthogonal
projector. We adapt \cite[Example~6.15]{BB97}, which illustrates 
the setting in which $f$ is an entropy function on $\RR^J$ and $C$ is
the ``probabilistic hyperplane'' $\menge{x\in \RR^J}{\sum_j \xi_j=1}$.
For simplicity, we work in $X=\RR^2$. 
Suppose that $f_1$ is the energy from
Example~\ref{ex:examples}(i) while
$f_2$ is the negative Boltzmann--Shannon entropy 
from Example~\ref{ex:examples}(ii). 
	Since we work in $\RR^2$, the probabilistic hyperplane
	is described by $\xi_2=1-\xi_1$. 
We compute $\bproj{C}(1,0)$ by substituting
$\eta_1=1,\eta_2=0,\xi_2=1-\xi_1$ and minimizing the resulting
Bregman distance over $\xi_1$. We obtain 
\begin{enumerate}
\item 
$\bproj{C}(1,2)=(0,1)$ for 
$D_{f_1}$,
\item 
$\bproj{C}(1,2)=\left( 1/3,2/3\right)$ for 
$D_{f_2}$. 
\end{enumerate}
	We illustrate this in Figure~\ref{fig:proj}. For
	$i\in\{1,2\}$,
	we sketch the contour plot of $D_{f_i}(\cdot,(1,2))$ for
	the level given by  $D_{f_i}(\bproj{C}(1,2),(1,2))$ together with the set $C$.
\end{example}

\begin{figure}
	\begin{center}
		\includegraphics[width=.5\columnwidth]{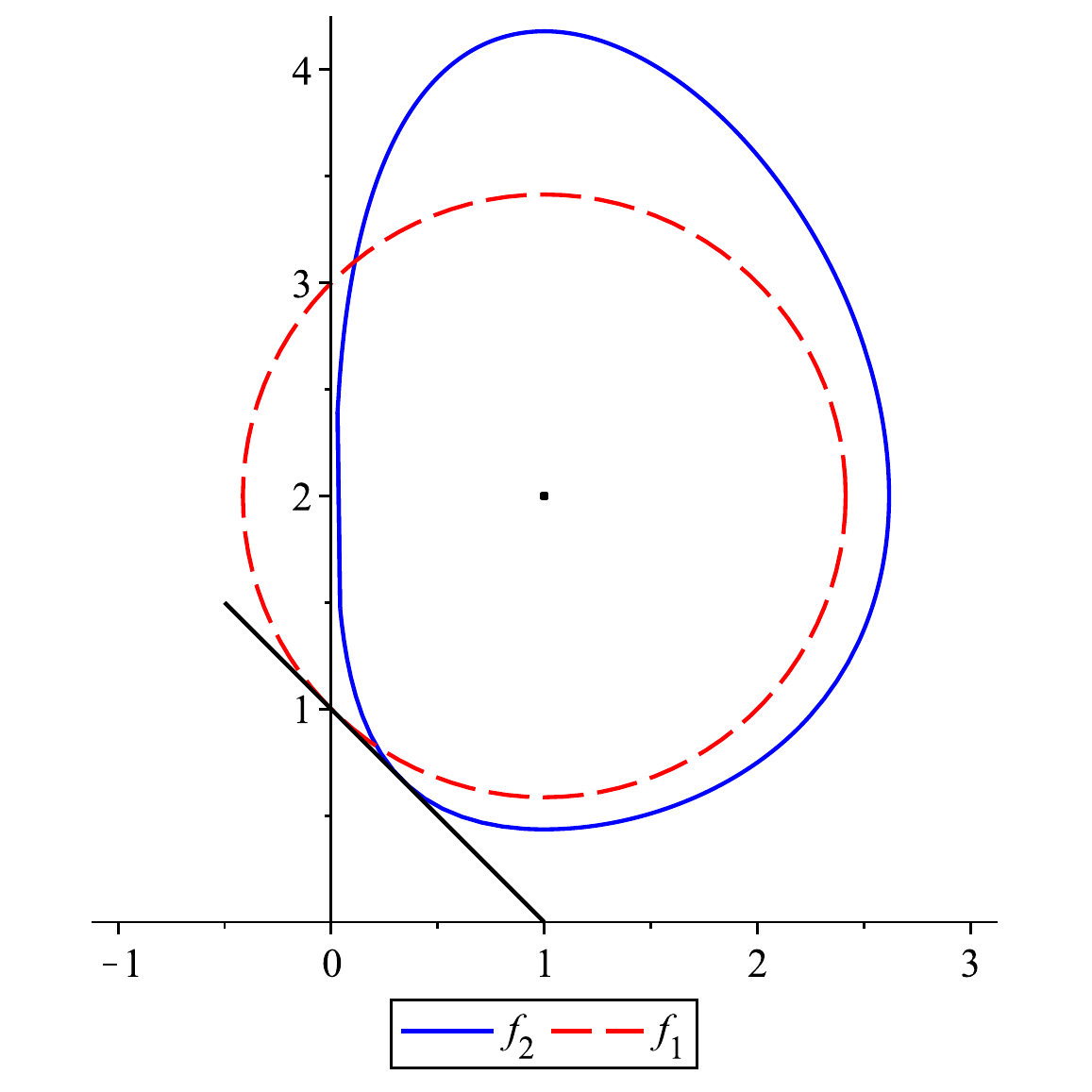}
	\end{center}
\caption{\cref{ex:proj} is illustrated}
\label{fig:proj}
\end{figure}

\begin{remark}
\label{r:prox}
Let $\theta\in\Gamma_0(X)$ be such that 
$U\cap \dom\theta\neq\varnothing$ and let $\gamma \in \RPP$.
\cref{p:12.22i} implies that 
$\benv{\gamma\theta} =\gamma\benv[\gamma]{\theta}$ 
and $\fenv{\gamma\theta} =\gamma\fenv[\gamma]{\theta}$. 
We thus derive from the definition that if \eqref{e:bcoeR} holds, then
\begin{subequations}
\label{e:benv-bproxall}
\begin{equation}
\label{e:benv-bprox}
\benv[\gamma]{\theta}(y) =\theta\big(\bprox{\gamma\theta}(y)\big) 
+\frac{1}{\gamma}D_f\big(\bprox{\gamma\theta}(y), y\big)
\end{equation}
and, by combining with \cref{p:12.9}\ref{p:12.9_left},
\begin{equation}
\label{e:benv-bprox'}
\theta\big(\bprox{\gamma\theta}(y)\big) \leq \benv[\gamma]{\theta}(y) \leq \theta(y).
\end{equation}
\end{subequations}
Similarly, if \eqref{e:fcoeR} holds, then
\begin{subequations}
\label{e:fenv-fproxall}
\begin{equation}
\label{e:fenv-fprox}
\fenv[\gamma]{\theta}(x) =\theta\big(\fprox{\gamma\theta}(x)\big) +
\frac{1}{\gamma}D_f\big(x, \fprox{\gamma\theta}(x)\big)
\end{equation}
and 
\begin{equation}
\theta\big(\fprox{\gamma\theta}(x)\big) \leq \fenv[\gamma]{\theta}(x) \leq \theta(x).
\end{equation}
\end{subequations}
\end{remark}

The next result provides information on the proximal mapping when
the parameter is varied. 
For a variant of the last inequality in \eqref{e:170513b}, 
see \cite[Proposition~2.1(ii)]{KS}. 

\begin{proposition}
\label{p:Lange}
Let $\theta\in\Gamma_0(X)$ be such that 
$U\cap \dom\theta\neq\varnothing$ and let $\gamma \in \RPP$.
\begin{enumerate}
\item 
\label{p:Langei}
If \eqref{e:bcoeR} holds, then
$(\forall y\in U)(\forall\mu \in \left]\gamma, +\infty\right[)$
\begin{equation}
\label{e:170513b}
\theta\big(\bprox{\mu\theta}(y)\big) \leq
\theta\big(\bprox{\gamma\theta}(y)\big)
\quad\text{and}\quad D_f\big(\bprox{\mu\theta}(y), y\big) \geq
D_f\big(\bprox{\gamma\theta}(y), y\big).
\end{equation}
\item 
\label{p:Langeii}
If \eqref{e:fcoeR} holds, then $(\forall x\in U)(\forall\mu \in \left]\gamma, +\infty\right[)$
\begin{equation}
\theta\big(\fprox{\mu\theta}(x)\big) \leq
\theta\big(\bprox{\gamma\theta}(x)\big)
\quad\text{and}\quad D_f\big(\bprox{\mu\theta}(x), x\big) 
\geq D_f\big(\bprox{\gamma\theta}(x), x\big).
\end{equation}
\end{enumerate}
\end{proposition}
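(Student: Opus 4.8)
The plan is to run the classical ``add the two optimality inequalities'' argument, which needs only the existence and uniqueness of the relevant minimizers. Fix $y\in U$ and $\mu\in\left]\gamma,+\infty\right[$. Since \eqref{e:bcoeR} holds and $\theta\in\Gamma_0(X)$, \cref{p:exact}\ref{p:exact_left} guarantees that $p:=\bprox{\gamma\theta}(y)$ and $q:=\bprox{\mu\theta}(y)$ are the unique minimizers over $x\in X$ of $\theta(x)+\frac{1}{\gamma}D_f(x,y)$ and of $\theta(x)+\frac{1}{\mu}D_f(x,y)$, respectively; moreover, since $y\in U=\dom\benv[\gamma]{\theta}=\dom\benv[\mu]{\theta}$ by \cref{p:12.9}\ref{p:12.9_left}, both minimum values are finite. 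Writing $a:=\theta(p)$, $b:=D_f(p,y)$, $c:=\theta(q)$, $d:=D_f(q,y)$, and using that $\theta>-\infty$ and $D_f(\cdot,y)\geq 0$, each of $a,b,c,d$ is real, so the arithmetic below avoids any $\infty-\infty$ ambiguity.

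First I would record the two optimality inequalities. Testing the $\gamma$-minimizer $p$ against the competitor $q$ gives $a+\frac{1}{\gamma}b\leq c+\frac{1}{\gamma}d$, and testing the $\mu$-minimizer $q$ against $p$ gives $c+\frac{1}{\mu}d\leq a+\frac{1}{\mu}b$. Adding these two inequalities and cancelling $a+c$ from both sides yields $\big(\frac{1}{\gamma}-\frac{1}{\mu}\big)(b-d)\leq 0$. Because $0<\gamma<\mu$ forces $\frac{1}{\gamma}-\frac{1}{\mu}>0$, I conclude $b\leq d$, which is precisely the asserted inequality $D_f\big(\bprox{\gamma\theta}(y),y\big)\leq D_f\big(\bprox{\mu\theta}(y),y\big)$.

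To get the first inequality, I would feed $b\leq d$ back into the $\mu$-optimality inequality, rewritten as $c-a\leq\frac{1}{\mu}(b-d)$. Since $b-d\leq 0$ and $\mu>0$, the right-hand side is nonpositive, whence $c\leq a$, i.e., $\theta\big(\bprox{\mu\theta}(y)\big)\leq\theta\big(\bprox{\gamma\theta}(y)\big)$. This settles \ref{p:Langei}. Part \ref{p:Langeii} follows verbatim after replacing \eqref{e:bcoeR} and \cref{p:exact}\ref{p:exact_left} by \eqref{e:fcoeR} and \cref{p:exact}\ref{p:exact_right}, the left operators $\bprox{\cdot}$ by the right operators $\fprox{\cdot}$, and the Bregman distance $D_f(\cdot,y)$ by $D_f(x,\cdot)$ for fixed $x\in U$.

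There is no serious obstacle here; the argument is purely order-theoretic once the minimizers are known to exist. The only points demanding care are the sign bookkeeping—keeping track of the direction of each optimality inequality and verifying $\frac{1}{\gamma}-\frac{1}{\mu}>0$—and the preliminary invocation of \cref{p:exact} to ensure that $\bprox{\gamma\theta}(y)$ and $\bprox{\mu\theta}(y)$ are well defined and that $a,b,c,d$ are finite.
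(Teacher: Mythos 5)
Your proof is correct, and it reconstructs exactly what the paper outsources: the paper's own proof of \cref{p:Lange} is a one-line appeal to \cref{r:prox} together with \cite[Proposition~7.6.1]{Lan16}, and the content of that cited proposition is precisely the ``add the two optimality inequalities'' computation you carry out. Making it self-contained has a real payoff: your inequality-chasing uses only the existence of the two minimizers, finiteness of $a,b,c,d$, and nonnegativity of $D_f(\cdot,y)$ — it needs neither the convexity of $\theta$ (which enters only through \cref{p:exact} to produce the minimizers), nor uniqueness of those minimizers, nor any of \textbf{A1}--\textbf{A4}, nor differentiability of the envelopes. The paper's route buys brevity; yours makes transparent how little the monotonicity statement actually depends on.

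One point deserves more care than you give it. To invoke \cref{p:exact}\ref{p:exact_left} for $q=\bprox{\mu\theta}(y)$ you need coercivity of $\theta+\tfrac{1}{\mu}D_f(\cdot,y)$, i.e., \eqref{e:bcoeR} with $\mu$ in place of $\gamma$; this is \emph{not} a consequence of \eqref{e:bcoeR} at $\gamma$, because coercivity propagates to smaller parameters (where the function is larger), not to larger ones — compare the opening lines of the proof of \cref{p:limprox}, where the paper is careful about exactly this direction. Admittedly this imprecision is inherited from the statement itself, which already presupposes that $\bprox{\mu\theta}(y)$ is well defined, so you are no worse off than the paper; but a clean write-up should either strengthen the hypothesis to ``\eqref{e:bcoeR} holds for every parameter'' or state explicitly that well-definedness of the $\mu$-prox is being assumed. (You also silently correct the typos in part \ref{p:Langeii}, where the paper mixes the left and right operators and the order of arguments in $D_f$; your reading is clearly the intended one.)
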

\begin{proof}
This follows from \cref{r:prox} and \cite[Proposition~7.6.1]{Lan16}.
\end{proof}

The left and right proximal mappings can be characterized in
various ways:

\begin{proposition}
\label{p:prox}
Let $\theta\in\Gamma_0(X)$ be such that $\dom\theta\cap U\neq\varnothing$ and let $\gamma \in \RPP$.
\begin{enumerate}
\item 
\label{p:prox_left}
Suppose that \eqref{e:bcoeR} holds.
Then for every $(x,y) \in U\times U$, 
the following conditions are equivalent:
	\begin{enumerate}
	\item 
	$x =\bprox{\gamma\theta}(y)$,
	\item 
	$0 \in \gamma\partial\theta(x) +\nabla f(x) -\nabla f(y)$,
	\item 
	$(\forall z \in X)\quad \scal{\nabla f(y) -\nabla f(x)}{z -x} +\gamma\theta(x) \leq \gamma\theta(z)$.
	\end{enumerate}
Moreover, 
\begin{equation} \label{e:pro}
\bprox{\gamma\theta}=(\nabla f +\gamma\partial\theta)^{-1}\circ \nabla f 
\end{equation}
is continuous on $U$. 
\item 
\label{p:prox_right}
Suppose that \eqref{e:fcoeR} holds. 
Then for every $(x,y) \in U\times U$, 
the following conditions are equivalent:
	\begin{enumerate}
	\item 
	$y =\fprox{\gamma\theta}(x)$,
	\item 
	$0 \in \gamma\partial\theta(y) +\nabla^2 f(y)(y -x)$,
	\item 
	$(\forall z \in X)\quad \scal{\nabla^2 f(y)(x -y)}{z -y} +\gamma\theta(y) \leq \gamma\theta(z)$.
	\end{enumerate}
Moreover, $\fprox{\gamma\theta}$ is continuous on $U$. 
\end{enumerate}
\end{proposition}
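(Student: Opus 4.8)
The plan is to treat both parts by a single scheme: write each proximal value as the minimizer of the (convex) objective whose infimum defines the corresponding envelope, apply Fermat's rule together with the subdifferential sum rule to obtain the inclusion in (b), unravel that inclusion through the definition of the subdifferential to reach the variational inequality in (c), and finally read off the operator formula and continuity. Throughout, the standing hypothesis $\dom\theta\cap U\neq\varnothing$ supplies the constraint qualification for the sum rule, and \cref{p:exact} guarantees that the relevant minimizer exists, is unique, and lies in $U$, so that every subdifferential is evaluated at an interior point of $\dom f$ where $f$ is differentiable.

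For \ref{p:prox_left}, recall $\bprox{\gamma\theta}(y)=\operatorname{argmin}_{x\in X}\big(\gamma\theta(x)+D_f(x,y)\big)$, and that for fixed $y\in U$ the map $x\mapsto D_f(x,y)=f(x)-f(y)-\scal{\nabla f(y)}{x-y}$ is convex (it is $f$ minus an affine term) and differentiable on $U$ with gradient $\nabla f(x)-\nabla f(y)$. Since $\dom\theta\cap U\neq\varnothing$ yields $\dom\theta\cap\reli\dom D_f(\cdot,y)\neq\varnothing$, the sum rule gives $\partial\big(\gamma\theta+D_f(\cdot,y)\big)(x)=\gamma\partial\theta(x)+\nabla f(x)-\nabla f(y)$ for $x\in U$, and Fermat's rule then makes (a) and (b) equivalent. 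Rewriting (b) as $\tfrac1\gamma\big(\nabla f(y)-\nabla f(x)\big)\in\partial\theta(x)$ and inserting the subgradient inequality for $\theta$ produces (c). Finally, (b) reads $\nabla f(y)\in(\nabla f+\gamma\partial\theta)(x)$, which together with uniqueness of the minimizer is exactly \eqref{e:pro}.

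For continuity in \ref{p:prox_left}, I would exploit that under the qualification the sum rule also gives $\nabla f+\gamma\partial\theta=\partial(f+\gamma\theta)$, whence $(\nabla f+\gamma\partial\theta)^{-1}=\partial(f+\gamma\theta)^*$. Because $\bprox{\gamma\theta}$ is single-valued on $U$, the subdifferential $\partial(f+\gamma\theta)^*$ is a singleton at every point of the open set $\ran\nabla f=U^*$, which lies in $\intdom(f+\gamma\theta)^*$ by \eqref{e:brilliantissimo}; hence $(f+\gamma\theta)^*$ is differentiable throughout $U^*$ and $\nabla(f+\gamma\theta)^*$ is continuous there. Composing with the continuous map $\nabla f$ from \eqref{e:Leggy} gives continuity of $\bprox{\gamma\theta}=\nabla(f+\gamma\theta)^*\circ\nabla f$ on $U$.

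Part \ref{p:prox_right} follows the same template, the one genuinely new ingredient being the gradient of $y\mapsto D_f(x,y)$: here assumption \textbf{A1} makes $f$ twice continuously differentiable, and a direct differentiation gives $\nabla_y D_f(x,y)=\nabla^2 f(y)(y-x)$, which is why the Hessian enters (b) and (c). Convexity of $D_f(x,\cdot)$ on $U$ (assumption \textbf{A3}) makes $\gamma\theta+D_f(x,\cdot)$ convex, so Fermat's rule and the sum rule again yield (a)$\Leftrightarrow$(b), and expanding $\tfrac1\gamma\nabla^2 f(y)(x-y)\in\partial\theta(y)$ gives (c). I expect the main obstacle to be the continuity of $\fprox{\gamma\theta}$, since there is no clean conjugate formula as in the left case; I would instead argue via the closed graph, showing that if $x_n\to x$ in $U$ and $y_n=\fprox{\gamma\theta}(x_n)$, then $(y_n)$ stays bounded (coercivity, \cref{f:coeR}\ref{l:coeRii}) and bounded away from $\bd U$ (essential smoothness of the Legendre $f$ forcing $\|\nabla f\|\to+\infty$ near $\bd U$), so that any limit point $y\in U$ satisfies (b) by continuity of $\nabla^2 f$ and closedness of $\gra\partial\theta$; uniqueness then forces $y_n\to\fprox{\gamma\theta}(x)$.
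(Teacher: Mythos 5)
Your route is genuinely different from the paper's: the paper proves this proposition in a single line, by applying \cite[Proposition~3.10]{BCN06} to $\gamma\theta$, whereas you give a self-contained argument. Most of it is correct. The equivalences in both parts are sound: Fermat's rule plus Rockafellar's sum rule under the qualification $\dom\theta\cap U\neq\varnothing$, together with $\partial f(x)=\{\nabla f(x)\}$ for $x\in U$ (essential smoothness of the Legendre function $f$, which also forces the minimizer into $U$, consistent with \cref{p:exact}) and the computation $\nabla_y D_f(x,y)=\nabla^2 f(y)(y-x)$ under \textbf{A1}, give (a)$\Leftrightarrow$(b), and (b)$\Leftrightarrow$(c) is exactly the subgradient inequality. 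Your continuity proof for $\bprox{\gamma\theta}$ is complete and rather elegant: $(\nabla f+\gamma\partial\theta)^{-1}=\partial(f+\gamma\theta)^*$ is single-valued at every point of the open set $U^*=\ran\nabla f$ by \eqref{e:Leggy}, and $U^*\subseteq\intdom(f+\gamma\theta)^*$ does follow from \eqref{e:brilliantissimo} --- though you should record the rescaling $(f+\gamma\theta)^*(v)=\gamma\big(\tfrac{1}{\gamma}f+\theta\big)^*(v/\gamma)$, since \eqref{e:brilliantissimo} is stated for $\big(\tfrac{1}{\gamma}f+\theta\big)^*$ --- so $(f+\gamma\theta)^*$ is differentiable on $U^*$ with continuous gradient (\cite[Theorem~25.5]{Roc70}), and composing with the continuous $\nabla f$ finishes part \ref{p:prox_left}.

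The continuity sketch for $\fprox{\gamma\theta}$, however, has a genuine gap in both of its preparatory steps. First, boundedness: coercivity of $\gamma\theta+D_f(x,\cdot)$ at the limit point $x$ (\cref{f:coeR}\ref{l:coeRii}) does not bound the minimizers $y_n=\fprox{\gamma\theta}(x_n)$, because the objectives for $x_n$ and $x$ differ by $f(x_n)-f(x)-\scal{\nabla f(y)}{x_n-x}$, and the term $\scal{\nabla f(y)}{x_n-x}$ is unbounded in $y$ over $U$ even when $\|x_n-x\|$ is small; so coercivity at $x$ yields no uniform level-boundedness along the sequence. Second, boundary avoidance: essential smoothness does give $\|\nabla f(y_n)\|\to+\infty$ when $y_n\to\bar y\in\bd U$, but the optimality condition (b) for the right prox involves $\nabla^2 f(y_n)$, not $\nabla f(y_n)$, so this blow-up does not by itself exclude boundary cluster points. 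The ingredients that repair both steps are: the continuity of the value function, $\gamma\theta(y_n)+D_f(x_n,y_n)=\gamma\fenv[\gamma]{\theta}(x_n)\to\gamma\fenv[\gamma]{\theta}(x)$ by \cref{p:exact}\ref{p:exact_right}, combined with the joint convexity \textbf{A2} of $(x,y)\mapsto\gamma\theta(y)+D_f(x,y)$ and uniqueness of minimizers (a segment argument along convex combinations of $(x,\fprox{\gamma\theta}(x))$ and $(x_n,y_n)$ rules out escape to infinity), and the Legendre-type boundary behaviour of $D_f$ from \cite{BB97} to rule out cluster points on $\bd U$; these are precisely what is packaged in \cite[Proposition~3.10]{BCN06}, which the paper simply cites. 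As written, your final paragraph asserts the two hard steps rather than proving them.
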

\begin{proof}
Apply \cite[Proposition~3.10]{BCN06} to $\gamma\theta$.
\end{proof}

\begin{remark}
Consider \cref{p:prox} and its notation.
\begin{enumerate}
\item
In the case of item~\ref{p:prox_left} and when $U^*=X$, we note that, 
by \eqref{e:Leggy}, 
\begin{equation}
\bprox{\gamma\theta} \circ\nabla f^* = \big(\nabla f
+\gamma\partial\theta\big)^{-1} = \partial\big(f+\gamma\theta\big)^*
\quad\text{is
maximally (cyclically) monotone};
\end{equation}
see also \cite[Theorem~4.2]{KS} for a more general result. 
\item
In the case of item~\ref{p:prox_right}, let us prove
the variant of \cite[Theorem~4.1]{KS}
stating that 
\begin{equation}
\label{e:tada}
\nabla f\circ \fprox{\gamma\theta}\quad \text{is monotone.}
\end{equation}
Indeed, let $x_1$ and $x_2$ be in $U$,
and set $y_i = \fprox{\gamma\theta}(x_i)$ for $i\in\{1,2\}$.
Then
$\theta(y_1)+\tfrac{1}{\gamma}D_f(x_1,y_1)
\leq \theta(y_2)+\tfrac{1}{\gamma}D_f(x_1,y_2)$ and
$\theta(y_2)+\tfrac{1}{\gamma}D_f(x_2,y_2)
\leq \theta(y_1)+\tfrac{1}{\gamma}D_f(x_2,y_1)$.
Adding and simplifying yields
\begin{equation}
\label{e:4points}
0 \leq 
D_f(x_1,y_2)+D_f(x_2,y_1)-D_f(x_1,y_1)-D_f(x_2,y_2).
\end{equation}
A direct expansion (or the \emph{four-point identity} from
\cite[Remark~2.5]{BauLew}) shows that 
\eqref{e:4points} is the same as 
\begin{equation}
0 \leq \scal{\nabla f(y_1)-\nabla f(y_2)}{x_1-x_2};
\end{equation}
therefore, \eqref{e:tada} follows.
We do not know whether or not in general the operator in \eqref{e:tada} 
is the gradient of a convex function. 
\end{enumerate}
\end{remark}

\begin{corollary}
\label{c:BregProj}
Let $C$ be a closed convex subset of $X$ such that 
$U\cap C\neq\varnothing$, 
let $(x,y) \in U\times U$, 
and let $p \in U\cap C$.
Then the following hold:
\begin{enumerate}
\item 
\label{c:BregProj_left}
$p =\bproj{C}y \ \Leftrightarrow \ (\forall z \in C)\; 
\scal{\nabla f(y) -\nabla f(p)}{z -p} \leq 0$. 
\item 
\label{c:BregProj_right}
$p =\fproj{C}x \ \Leftrightarrow \ (\forall z \in C)\; 
\scal{\nabla^2 f(p)(x -p)}{z -p} \leq 0$. 
\end{enumerate}
\end{corollary}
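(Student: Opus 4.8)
The plan is to obtain both parts as direct specializations of the proximal-mapping characterization in \cref{p:prox}, taking $\theta = \iota_C$ and $\gamma = 1$. First I would record that the hypotheses are in force: as noted just before \cref{d:BregProj}, $\iota_C \in \Gamma_0(X)$ with $U \cap \dom\iota_C = U\cap C \neq \varnothing$ and $\inf\iota_C(U) = 0 > -\infty$, so \cref{f:coeR} ensures that both \eqref{e:bcoeR} and \eqref{e:fcoeR} hold for every $\gamma \in \RPP$. Hence $\bproj{C} = \bprox{\iota_C}$ and $\fproj{C} = \fprox{\iota_C}$ are well-defined on $U$ and \cref{p:prox} applies.

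For \ref{c:BregProj_left}, I would invoke the equivalence between conditions (a) and (c) of \cref{p:prox}\ref{p:prox_left}, with the point $x$ there replaced by $p$. This gives that $p = \bproj{C}y$ holds if and only if $(\forall z \in X)\ \scal{\nabla f(y) - \nabla f(p)}{z - p} + \iota_C(p) \leq \iota_C(z)$. The final step is to simplify the indicator: since $p \in U\cap C$ we have $\iota_C(p) = 0$; for $z \notin C$ the right-hand side equals $+\infty$ and the inequality is automatic, while for $z \in C$ it collapses to $\scal{\nabla f(y) - \nabla f(p)}{z - p} \leq 0$. This is precisely the asserted characterization.

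Part \ref{c:BregProj_right} follows the same template from the equivalence of (a) and (c) in \cref{p:prox}\ref{p:prox_right}, this time identifying $p$ with the point denoted $y$ there. One obtains $p = \fproj{C}x \Leftrightarrow (\forall z \in X)\ \scal{\nabla^2 f(p)(x - p)}{z - p} + \iota_C(p) \leq \iota_C(z)$, and the identical reduction of $\iota_C$ (using $\iota_C(p) = 0$ and $\iota_C(z) = +\infty$ for $z \notin C$) restricts the inequality to $z \in C$, yielding the stated variational inequality.

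There is no genuine obstacle here: the argument is a routine transcription of \cref{p:prox} for an indicator function, and the only points needing care are verifying the coercivity hypotheses (already available from the discussion preceding \cref{d:BregProj}) and keeping the variable roles straight between the left and right versions. It is worth noting in passing that assumption \textbf{A1} is what guarantees that $\nabla^2 f(p)$ exists, so that the variational inequality appearing in \ref{c:BregProj_right} is well-posed.
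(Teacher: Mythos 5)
Your proposal is correct and follows exactly the route of the paper's own (one-line) proof, which simply applies \cref{p:prox} in light of \cref{d:BregProj}; you have merely written out the specialization $\theta=\iota_C$, $\gamma=1$ and the reduction of the indicator terms explicitly. The verification of \eqref{e:bcoeR} and \eqref{e:fcoeR} via \cref{f:coeR}, which you include, is precisely the observation the paper makes in the paragraph preceding \cref{d:BregProj}, so nothing is missing and nothing differs in substance.
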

\begin{proof}
In light of \cref{d:BregProj}, we 
apply \cref{p:prox} 
(see also \cite[Proposition~3.16]{BB97}).
\end{proof}

The derivatives of the left and right Bregman--Moreau envelopes
feature the corresponding proximal mappings as follows.

\begin{proposition} 
\label{p:grad}
Let $\theta\in\Gamma_0(X)$ be such that 
$U\cap \dom\theta\neq\varnothing$ and let $\gamma \in \RPP$.
Then the following hold:
\begin{enumerate}
\item
\label{p:grad_left}
If \eqref{e:bcoeR} holds, then $\benv[\gamma]{\theta}$ is differentiable on $U$ and 
\begin{equation} 
(\forall y\in U)\quad \nabla\benv[\gamma]{\theta}(y) =\frac{1}{\gamma}\nabla^2 f(y)(y -\bprox{\gamma\theta}(y)).
\end{equation}
\item 
\label{p:grad_right}
If \eqref{e:fcoeR} holds, then $\fenv[\gamma]{\theta}$ is differentiable on $U$ and 
\begin{equation} 
(\forall x\in U)\quad \nabla\fenv[\gamma]{\theta}(x) =\frac{1}{\gamma}\nabla f(x) -\frac{1}{\gamma}\nabla f(\fprox{\gamma\theta}(x)).
\end{equation}
\end{enumerate}
\end{proposition}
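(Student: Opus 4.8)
The plan is to derive the gradient formulas from the representations of the envelopes already established in \cref{p:170513a}, combined with the fact (from \cref{p:prox}) that the proximity operators are continuous and are expressed through $\nabla f$ and $\nabla f^*$. For \ref{p:grad_left}, I would start from \cref{p:170513ai}, which gives $\gamma\benv[\gamma]{\theta}\circ\nabla f^* = f^* - (\gamma\theta+f)^*$. Since $f$ is Legendre, $\nabla f\colon U\to U^*$ is a homeomorphism with inverse $\nabla f^*$, so writing $y=\nabla f^*(y^*)$ converts differentiability questions on $U$ into differentiability questions for conjugate functions on $U^*$. The coercivity hypothesis \eqref{e:bcoeR}, equivalently \eqref{e:brilliantissimo}, guarantees that $\ran\nabla f\subseteq\intdom(\tfrac1\gamma f+\theta)^*$, which is exactly what makes the relevant conjugate differentiable on the appropriate set; its gradient is the associated proximity operator via \eqref{e:pro}.

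Concretely, for the left envelope I would differentiate $g^* := (\gamma\theta+f)^*$ at $y^*=\nabla f(y)$. Standard conjugate-subdifferential calculus (together with \cref{p:prox_left}, which identifies $\bprox{\gamma\theta}=(\nabla f+\gamma\partial\theta)^{-1}\circ\nabla f$) shows $\nabla g^*(\nabla f(y)) = \bprox{\gamma\theta}(y)$, while $\nabla f^*(\nabla f(y))=y$. Applying the chain rule to $\gamma\benv[\gamma]{\theta}=\big(f^*-g^*\big)\circ\nabla f$ and using that $\nabla(f^*\circ\nabla f)(y)=\nabla f^*(\nabla f(y))\,\nabla^2 f(y)=y\,\nabla^2 f(y)$, one collects the $\nabla^2 f(y)$ factor out front and obtains
\begin{equation}
\nabla\benv[\gamma]{\theta}(y)=\frac{1}{\gamma}\nabla^2 f(y)\big(y-\bprox{\gamma\theta}(y)\big).
\end{equation}
Continuity of $\bprox{\gamma\theta}$ on $U$ (\cref{p:prox_left}) together with continuity of $\nabla^2 f$ (\textbf{A1}) confirms that the right-hand side is a genuine continuous gradient, justifying differentiability rather than mere existence of a subgradient.

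For \ref{p:grad_right} I would argue analogously starting from \cref{p:170513aii}, namely $\gamma\fenv[\gamma]{\theta}=f-\big((\gamma\theta\circ\nabla f^*)+f^*\big)^*$. Differentiating the conjugate $h^*$ of $h:=(\gamma\theta\circ\nabla f^*)+f^*$ at $x$ yields, via \cref{p:prox_right} and the identity $\nabla f^*(\nabla f(y))=y$, the value $\nabla h^*(x)=\nabla f\big(\fprox{\gamma\theta}(x)\big)$, so that
\begin{equation}
\nabla\fenv[\gamma]{\theta}(x)=\frac{1}{\gamma}\nabla f(x)-\frac{1}{\gamma}\nabla f\big(\fprox{\gamma\theta}(x)\big),
\end{equation}
and continuity follows as before. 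The main obstacle is the rigorous differentiability step: I must verify that each conjugate function is actually differentiable (not merely subdifferentiable) at the relevant points, which is where the coercivity assumption, essential strict convexity of $f$ (yielding single-valued, uniquely attained proximal points via \cref{p:exact}), and the Legendre homeomorphism \eqref{e:Leggy} must be combined carefully. A clean way to sidestep heavy conjugate calculus would be to instead compute the directional derivative of $\benv[\gamma]{\theta}$ directly from its infimal definition, exploiting that the infimum is attained uniquely at $z=\bprox{\gamma\theta}(y)$ and that the envelope is convex and finite-valued on the open set $U$ (so it is locally Lipschitz and differentiability reduces to showing the subdifferential is a singleton). Either route reduces the problem to an envelope-theorem/Danskin-type argument, and the delicate point is controlling the inner minimizer's dependence on the parameter to legitimize differentiating under the infimum.
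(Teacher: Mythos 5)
Your proposal takes a genuinely different route from the paper, whose proof is a one-liner: it applies the $\gamma=1$ gradient formulas of \cite[Proposition~3.12]{BCN06} to $\gamma\theta$ and rescales via \cref{r:prox} ($\benv[\gamma]{\theta}=\tfrac{1}{\gamma}\benv{\gamma\theta}$, $\fenv[\gamma]{\theta}=\tfrac{1}{\gamma}\fenv{\gamma\theta}$). Your self-contained argument for \ref{p:grad_left} is sound: $g:=\gamma\theta+f$ is proper, lower semicontinuous and \emph{convex}, so $\partial g^*(\nabla f(y))$ is exactly the set of maximizers of $\scal{\nabla f(y)}{\cdot}-g$, which by \cref{p:exact} is the singleton $\{\bprox{\gamma\theta}(y)\}$; a singleton subdifferential gives differentiability in finite dimensions, and the chain rule through $\nabla f$ (this is where \textbf{A1} enters) yields the stated formula.

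The gap is in \ref{p:grad_right}. The function you conjugate there, $h:=(\gamma\theta\circ\nabla f^*)+f^*$, is in general \emph{not} convex: $\theta\circ\nabla f^*$ need not be convex. For instance, with the Boltzmann--Shannon entropy ($J=1$), $\theta=|\cdot-\tfrac12|$ and $\gamma>1$, one gets $h(y^*)=\gamma|e^{y^*}-\tfrac12|+e^{y^*}$, which is concave on $\left]-\infty,-\ln 2\right[$. For nonconvex $h$, ``standard conjugate-subdifferential calculus'' fails at exactly the step you need: unique attainment of the supremum defining $h^*(x)$ does \emph{not} imply that $h^*$ is differentiable at $x$. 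One only has the inclusion $\{\text{maximizers of }\scal{x}{\cdot}-h\}\subseteq\partial h^*(x)$, whereas $\partial h^*(x)$ equals the set of maximizers of $\scal{x}{\cdot}-h^{**}$, which can be strictly larger. A one-dimensional counterexample: $h(t)=e^{-t}$ for $t>0$, $h(0)=0$, $h\equiv+\infty$ on $\RMM$; at $x=0$ the supremum is attained only at $t=0$, yet $h^*=\iota_{\RM}$ and $\partial h^*(0)=\RP$. The ingredients you list (coercivity, essential strict convexity of $f$, the Legendre homeomorphism) do not close this step, because the obstruction is the nonconvexity of $h$, not any non-uniqueness of the maximizer.

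The step can be repaired, but the cleanest repair uses an ingredient missing from your primary route: the convexity of $\fenv[\gamma]{\theta}$ itself, which is the inf-projection of the jointly convex (by \textbf{A2}) function $\Phi(x,y)=\theta(y)+\tfrac{1}{\gamma}D_f(x,y)$; cf.\ \cref{p:exact}\ref{p:exact_right}. Since $\gamma\fenv[\gamma]{\theta}+h^*=f$ on all of $X$, picking any $u\in\partial\big(\gamma\fenv[\gamma]{\theta}\big)(x)$ and $v\in\partial h^*(x)$ (both sets are nonempty because both functions are convex and finite near $x\in U$) gives $u+v\in\partial f(x)=\{\nabla f(x)\}$, which forces \emph{both} subdifferentials to be singletons; hence both functions are differentiable at $x$, and the inclusion $\nabla f\big(\fprox{\gamma\theta}(x)\big)\in\partial h^*(x)$ then identifies the gradient and yields the formula. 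Alternatively, your sketched fallback of working directly with the infimal definition is the standard route: for jointly convex $\Phi$ one has $v\in\partial\fenv[\gamma]{\theta}(x)$ if and only if $(v,0)\in\partial\Phi\big(x,\fprox{\gamma\theta}(x)\big)$, and this convex inf-projection formula requires \emph{no} control of the inner minimizer's dependence on the parameter --- the delicacy you flag belongs to the nonconvex Danskin setting and is not an obstacle here.
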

\begin{proof}
Combine Remark~\ref{r:prox}
with \cite[Proposition~3.12]{BCN06}.
\end{proof}

The following result, which is a variant of
\cite[Theorem~XV.4.1.7]{HUL93}, highlights the connection to
convex optimization. 

\begin{theorem}
\label{p:min}
Let $\theta\in\Gamma_0(X)$ be such that 
$U\cap \dom\theta\neq\varnothing$, let $\gamma \in \RPP$, and let $x, y \in U$.
\begin{enumerate}
\item 
\label{p:min_left}
Suppose that \eqref{e:bcoeR} holds. Then the following are equivalent:
	\begin{enumerate}
	\item \label{p:min_left1} $y \in \argmin\theta$,
	\item \label{p:min_left2} $y \in \Fix\bprox{\gamma\theta}$,
	\item \label{p:min_left3} $y \in \argmin\benv[\gamma]{\theta}$,
	\item \label{p:min_left4} $\theta(\bprox{\gamma\theta}(y)) =\theta(y)$,
	\item \label{p:min_left5} $\benv[\gamma]{\theta}(y) =\theta(y)$.
	\end{enumerate} 
Consequently, 
\begin{equation}
\label{e:min_left}
U\cap \argmin\theta =\Fix\bprox{\gamma\theta}
=\argmin\benv[\gamma]{\theta}.
\end{equation}
\item
\label{p:min_right}
Suppose that \eqref{e:fcoeR} holds. Then the following are equivalent:
	\begin{enumerate}
	\item \label{p:min_right1} $x \in \argmin\theta$,
	\item \label{p:min_right2} $x \in \Fix\fprox{\gamma\theta}$,
	\item \label{p:min_right3} $x \in \argmin\fenv[\gamma]{\theta}$,
	\item \label{p:min_right4} $\theta(\fprox{\gamma\theta}(x)) =\theta(x)$,
	\item \label{p:min_right5} $\fenv[\gamma]{\theta}(x) =\theta(x)$.
	\end{enumerate} 
Consequently, 
\begin{equation}
\label{e:min_right}
U\cap \argmin\theta =\Fix\fprox{\gamma\theta} =U\cap \argmin\fenv[\gamma]{\theta}.
\end{equation}
\end{enumerate}
\end{theorem}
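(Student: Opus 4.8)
The plan is to prove part \ref{p:min_left} in full and to obtain part \ref{p:min_right} by the symmetric argument, replacing each left object by its right counterpart. Throughout I would fix $y\in U$ and abbreviate $p:=\bprox{\gamma\theta}(y)$, which is a well-defined point of $U$ because $\theta\in\Gamma_0(X)$ and \eqref{e:bcoeR} holds, so that \cref{p:exact}\ref{p:exact_left} applies. The three tools I would lean on are: the subdifferential characterization of the proximal mapping from \cref{p:prox}\ref{p:prox_left}; the sandwich relations $\theta(p)\le\benv[\gamma]{\theta}(y)\le\theta(y)$ together with the identity $\benv[\gamma]{\theta}(y)=\theta(p)+\frac1\gamma D_f(p,y)$ from \cref{r:prox}; and the nonnegativity of $D_f$ combined with the equivalence $D_f(p,y)=0\Leftrightarrow p=y$ from \cref{f:marco}\ref{f:marcoi}.

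First I would dispatch \ref{p:min_left1}$\Leftrightarrow$\ref{p:min_left2}: substituting $x=y$ into the subdifferential characterization of \cref{p:prox}\ref{p:prox_left} collapses $\nabla f(y)-\nabla f(y)$ to $0$, so $y=\bprox{\gamma\theta}(y)$ is equivalent to $0\in\gamma\partial\theta(y)$, hence (as $\gamma>0$) to $0\in\partial\theta(y)$, which is Fermat's rule for $y\in\argmin\theta$. Next I would run the cycle \ref{p:min_left2}$\Rightarrow$\ref{p:min_left4}$\Rightarrow$\ref{p:min_left5}$\Rightarrow$\ref{p:min_left2}. The first implication is immediate, since $p=y$ forces $\theta(p)=\theta(y)$. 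For \ref{p:min_left4}$\Rightarrow$\ref{p:min_left5}, feeding $\theta(p)=\theta(y)$ into the identity gives $\benv[\gamma]{\theta}(y)=\theta(y)+\frac1\gamma D_f(p,y)\ge\theta(y)$, and the upper sandwich bound forces equality, i.e.\ $\benv[\gamma]{\theta}(y)=\theta(y)$. For \ref{p:min_left5}$\Rightarrow$\ref{p:min_left2}, I would note that $\benv[\gamma]{\theta}(y)=\theta(y)<+\infty$ (as $y\in U=\dom\benv[\gamma]{\theta}$), so $x=y$ itself attains the infimum defining $\benv[\gamma]{\theta}(y)$; uniqueness of the minimizer in \cref{p:exact}\ref{p:exact_left} then yields $y=p$. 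Finally, for \ref{p:min_left1}$\Leftrightarrow$\ref{p:min_left3} I would invoke $\inf\benv[\gamma]{\theta}(X)=\inf\theta(X)$ from \cref{p:12.9}\ref{p:12.9_left}: if $y\in\argmin\theta$, the already-established \ref{p:min_left5} gives $\benv[\gamma]{\theta}(y)=\theta(y)=\inf\theta(X)=\inf\benv[\gamma]{\theta}(X)$; conversely, if $\benv[\gamma]{\theta}(y)=\inf\theta(X)$, the sandwich squeezes $\theta(p)=\inf\theta(X)$ and the identity then forces $D_f(p,y)=0$, whence $p=y\in\argmin\theta$ by \cref{f:marco}\ref{f:marcoi}.

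The set identity \eqref{e:min_left} then drops out: $\Fix\bprox{\gamma\theta}\subseteq U$ and $\argmin\benv[\gamma]{\theta}\subseteq\dom\benv[\gamma]{\theta}=U$, so reading the equivalences \ref{p:min_left1}$\Leftrightarrow$\ref{p:min_left2}$\Leftrightarrow$\ref{p:min_left3} over $y\in U$ identifies all three sets with $U\cap\argmin\theta$. For part \ref{p:min_right} the same scaffolding applies, using \cref{p:prox}\ref{p:prox_right} (where $y=x$ again annihilates the term $\nabla^2 f(x)(x-x)$), the identity \eqref{e:fenv-fprox}, and \cref{p:12.9}\ref{p:12.9_right}. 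The one structural difference I would flag — and essentially the only place demanding care — is that $\dom\fenv[\gamma]{\theta}=\dom f$ rather than $U$, so $\argmin\fenv[\gamma]{\theta}$ may pick up boundary points of $U$; this is exactly why the right-hand identity retains the intersection $U\cap\argmin\fenv[\gamma]{\theta}$, and why the equivalences are asserted only for $x\in U$. Beyond that, the main obstacle is purely the bookkeeping of finiteness and feasibility: verifying at each step that the relevant point lies in $\dom\theta\cap U$, so that the envelope genuinely equals $\theta$ there and uniqueness of the proximal point is legitimately available.
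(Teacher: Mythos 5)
Your proof is correct, and most of it runs on the same rails as the paper's: the equivalence \ref{p:min_left1}$\Leftrightarrow$\ref{p:min_left2} via the subdifferential characterization in \cref{p:prox}\ref{p:prox_left}, the implication \ref{p:min_left5}$\Rightarrow$\ref{p:min_left2} via uniqueness of the proximal point from \cref{p:exact}\ref{p:exact_left}, the downward implications via the sandwich \eqref{e:benv-bproxall} combined with $D_f(p,y)=0\Rightarrow p=y$ from \cref{f:marco}\ref{f:marcoi}, and the same explanation for why \eqref{e:min_right} retains the intersection with $U$. The one genuine divergence is how \ref{p:min_left3} is wired in. The paper proves \ref{p:min_left2}$\Rightarrow$\ref{p:min_left3} by stationarity: if $y=\bprox{\gamma\theta}(y)$, then $\nabla\benv[\gamma]{\theta}(y)=\frac{1}{\gamma}\nabla^2 f(y)\big(y-\bprox{\gamma\theta}(y)\big)=0$ by \cref{p:grad}\ref{p:grad_left}, and convexity of the envelope finishes. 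You instead prove \ref{p:min_left1}$\Rightarrow$\ref{p:min_left3} by combining the already-established \ref{p:min_left5} with the infimum identity $\inf\theta(X)=\inf\benv[\gamma]{\theta}(X)$ of \cref{p:12.9}\ref{p:12.9_left}; your converse \ref{p:min_left3}$\Rightarrow$\ref{p:min_left1} is then the same squeeze the paper uses for \ref{p:min_left3}$\Rightarrow$\ref{p:min_left2}. The trade-off: your route never touches \cref{p:grad}, so the argmin equivalence requires neither differentiability of the envelope nor the Hessian (assumption \textbf{A1} plays no role in that step), which makes it marginally more elementary; the paper's version is a one-line deduction once the gradient formula is available and exhibits the fixed-point property as a genuine stationarity statement. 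Your cyclic decomposition \ref{p:min_left2}$\Rightarrow$\ref{p:min_left4}$\Rightarrow$\ref{p:min_left5}$\Rightarrow$\ref{p:min_left2}, versus the paper's hub-and-spoke around \ref{p:min_left2}, is cosmetic. One bookkeeping point you gesture at but should make explicit: in \ref{p:min_left4}$\Rightarrow$\ref{p:min_left5}, no $\infty-\infty$ ambiguity arises because $\theta\big(\bprox{\gamma\theta}(y)\big)\leq\benv[\gamma]{\theta}(y)<+\infty$ (as $y\in U=\dom\benv[\gamma]{\theta}$ and the envelope is proper by \cref{p:exact}\ref{p:exact_left}), so \ref{p:min_left4} already forces $\theta(y)$ to be finite.
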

\begin{proof}
\ref{p:min_left}: Using \cref{p:prox}\ref{p:prox_left}, we have
\begin{subequations}
\begin{align}
y \in \argmin\theta &\Leftrightarrow \ 0 \in \partial\theta(y) \Leftrightarrow \ 0 \in \gamma\partial\theta(y) +\nabla f(y) -\nabla f(y) \\
&\Leftrightarrow \ y =\bprox{\gamma\theta}(y) \Leftrightarrow \ y \in \Fix\bprox{\gamma\theta}.
\end{align}
\end{subequations}
This proves that \ref{p:min_left1} $\Leftrightarrow$ \ref{p:min_left2}. 

Assume that \ref{p:min_left2} holds, i.e., $y =\bprox{\gamma\theta}(y)$. 
Then 
$\nabla\benv[\gamma]{\theta}(y) =0$ by \cref{p:grad}\ref{p:grad_left}, 
and thus \ref{p:min_left3} holds by the convexity of $\benv[\gamma]{\theta}$ shown in \cref{p:exact}\ref{p:exact_left}.
Next, \ref{p:min_left4} is obvious and \ref{p:min_left5} holds due to \eqref{e:benv-bprox}.

Now recall from \eqref{e:benv-bproxall} that 
\begin{equation}
\label{e:benv-bprox''}
\theta\big(\bprox{\gamma\theta}(y)\big) \leq 
\theta\big(\bprox{\gamma\theta}(y)\big) +\frac{1}{\gamma}
D_f\big(\bprox{\gamma\theta}(y), y\big) 
=\benv[\gamma]{\theta}(y) \leq \theta(y).
\end{equation}
If \ref{p:min_left3} holds, then since $\inf\benv[\gamma]{\theta}(X) \leq \inf\theta(U)$ (see \cref{p:12.9}\ref{p:12.9_left}), 
combining with \eqref{e:benv-bprox''} yields
\begin{equation}
\inf\theta(U) \leq \theta(\bprox{\gamma\theta}(y)) \leq \benv[\gamma]{\theta}(y) =\min\benv[\gamma]{\theta}(X) \leq \inf\theta(U),
\end{equation}
which implies that $D_f(\bprox{\gamma\theta}(y), y) =0$, so $y =\bprox{\gamma\theta}(y)$ due to \cref{f:marco}\ref{f:marcoi}, 
and we get \ref{p:min_left2}.

If \ref{p:min_left4} holds, then by \eqref{e:benv-bprox''}, $D_f(\bprox{\gamma\theta}(y), y) =0$, and \ref{p:min_left2} thus holds.

Finally, if \ref{p:min_left5} holds, then $\benv[\gamma]{\theta}(y) =\theta(y) +\frac{1}{\gamma}D_f(y, y)$, 
and using \cref{p:exact}\ref{p:exact_left} and \eqref{e:benv-bprox}, we must have $y =\bprox{\gamma\theta}(y)$ 
and therefore get \ref{p:min_left2}.  

\ref{p:min_right}: This is proved similarly to \ref{p:min_left} by using \cref{p:12.9}\ref{p:12.9_right}, 
\cref{p:exact}\ref{p:exact_right}, \cref{p:prox}\ref{p:prox_right}, \cref{p:grad}\ref{p:grad_right}, and \eqref{e:fenv-fprox}. 
The difference between 
\eqref{e:min_left} and \eqref{e:min_right} is because 
$\dom\benv[\gamma]{\theta} =U$ while $\dom\fenv[\gamma]{\theta} =\dom f$.
\end{proof}

\section{Asymptotic behaviour properties}

\label{s:drei}

The results in this section, almost all of which are new,
extend or complement results for the classical energy case and
for left variants studied in \cite{CKS} and \cite{KS}. 
We will require the following lemma. 

\begin{lemma}
\label{l:cvg}
Let $C$ be a compact subset of a Hausdorff space $\mathcal X$,
let $\phi\colon \mathcal X \to \left[-\infty, +\infty\right]$ be lower 
semicontinuous,
let $(x_a)_{a\in A}$ be a net in $C$,
and suppose that $\phi(x_a) \to \inf\phi(\mathcal X)$. 
Then $\argmin\phi \neq\varnothing$ 
and all cluster points of $(x_a)_{a\in A}$ lie in $\argmin\phi$.
Consequently, if $\phi$ attains its minimum at a unique point $u$, then $x_a \to u$.
\end{lemma}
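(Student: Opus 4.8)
The plan is to exploit the compactness of $C$ through the language of nets. Since $(x_a)_{a\in A}$ is a net in the compact set $C$, it possesses at least one cluster point $\bar x \in C$; equivalently, there is a subnet $(x_{a_b})_{b\in B}$ with $x_{a_b} \to \bar x$. This immediately furnishes a candidate minimizer, and the mere existence of such a cluster point is what will give $\argmin\phi \neq \varnothing$.

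Next I would identify every cluster point as a minimizer. Fix any cluster point $\bar x$ and a subnet $x_{a_b} \to \bar x$. Since $\phi(x_a) \to \inf\phi(\mathcal X)$, the subnet inherits the same limit, so $\phi(x_{a_b}) \to \inf\phi(\mathcal X)$. Lower semicontinuity of $\phi$, in its net formulation, gives $\phi(\bar x) \leq \liminf_b \phi(x_{a_b}) = \inf\phi(\mathcal X)$. Combined with the trivial reverse bound $\phi(\bar x) \geq \inf\phi(\mathcal X)$, this forces $\phi(\bar x) = \inf\phi(\mathcal X)$, i.e.\ $\bar x \in \argmin\phi$. Applying this to an arbitrary cluster point proves that all cluster points lie in $\argmin\phi$, and applying it to the cluster point produced in the first step shows $\argmin\phi \neq \varnothing$.

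Finally, for the consequence, suppose $\phi$ attains its minimum at a unique point $u$, so that $\argmin\phi = \{u\}$. By the previous step, $u$ is the only possible cluster point of $(x_a)_{a\in A}$, and the first step guarantees that it genuinely is one. It then remains to upgrade ``unique cluster point'' to ``convergence'', and this is the step I expect to be the main obstacle, since the implication fails without compactness. I would argue by contradiction: if $x_a \not\to u$, then there is an open neighborhood $V$ of $u$ outside of which the net lies frequently, producing a subnet contained in the closed---hence compact---set $C \setminus V$. That subnet has a cluster point $v \in C \setminus V$, which is also a cluster point of $(x_a)_{a\in A}$ yet distinct from $u$, contradicting uniqueness. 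Hence $x_a \to u$.
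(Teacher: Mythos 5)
Your argument is correct and matches the paper's proof essentially step for step: compactness of $C$ yields a cluster point, lower semicontinuity combined with $\phi(x_a)\to\inf\phi(\mathcal X)$ identifies every cluster point as a minimizer, and uniqueness of the minimizer then forces convergence. The only difference is that for the final step (a net in a compact set with a unique cluster point converges to it) the paper simply cites \cite[Lemma~1.14]{BC17}, whereas you prove it inline by a correct contradiction argument using compactness of $C\setminus V$.
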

\begin{proof}
This follows from the lower semicontinuity of $\phi$ and \cite[Lemma 1.14]{BC17}.
\end{proof}

What is the behaviour of Bregman--Moreau envelopes and proximity operators when $\gamma\downarrow 0$?
The next two results provide answers.

\begin{proposition}
\label{p:limprox}
Let $\theta\in\Gamma_0(X)$ be such that 
$U\cap \dom\theta\neq\varnothing$ and let $x, y \in U$.
Then the following hold:
\begin{enumerate}
\item 
\label{p:limprox_left}
If \eqref{e:bcoeR} holds for some $\mu \in \RPP$ instead of $\gamma$, 
then $\bprox{\gamma\theta}(y) \to y$ as $\gamma \downarrow 0$.
\item 
\label{p:limprox_right}
If \eqref{e:fcoeR} holds for some $\mu \in \RPP$ instead of $\gamma$, 
then $\fprox{\gamma\theta}(x) \to x$ as $\gamma \downarrow 0$.
\end{enumerate}	
\end{proposition}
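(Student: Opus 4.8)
The plan is to prove part \ref{p:limprox_left} and to obtain \ref{p:limprox_right} by the analogous argument (swapping the two arguments of $D_f$ and invoking the ``right'' companions of the cited results). Fix $y\in U$ and abbreviate $x_\gamma:=\bprox{\gamma\theta}(y)$. Since \eqref{e:bcoeR} holds for the given $\mu$, it also holds for every $\gamma\in\left]0,\mu\right]$: writing $\theta+\tfrac1\gamma D_f(\cdot,y)=\big(\theta+\tfrac1\mu D_f(\cdot,y)\big)+\big(\tfrac1\gamma-\tfrac1\mu\big)D_f(\cdot,y)$ exhibits it as the sum of a coercive function and a nonnegative one, hence coercive. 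Thus $x_\gamma$ is well defined for all small $\gamma$, and I would work on $\left]0,\gamma_0\right]$ for a fixed $\gamma_0\in\left]0,\mu\right]$, viewing $(x_\gamma)$ as a net directed by $\gamma\downarrow 0$.

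First I would confine the net to a compact set. By \cref{p:Lange}\ref{p:Langei} the map $\gamma\mapsto D_f(x_\gamma,y)$ is nondecreasing, so $D_f(x_\gamma,y)\le D_f(x_{\gamma_0},y)=:M$ for every $\gamma\in\left]0,\gamma_0\right]$. Because $D_f(\cdot,y)$ is lower semicontinuous and coercive by \cref{f:marco}\ref{f:marcoii}, the sublevel set $S:=\lev{M}D_f(\cdot,y)$ is closed and bounded, hence compact, and $x_\gamma\in S$ throughout.

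The crux is the convergence $D_f(x_\gamma,y)\to 0$. By \cref{d:prox}, $x_\gamma$ minimizes $\gamma\theta(\cdot)+D_f(\cdot,y)$ over $X$; comparing the optimal value with the value at $y$ gives $\gamma\theta(x_\gamma)+D_f(x_\gamma,y)\le\gamma\theta(y)+D_f(y,y)=\gamma\theta(y)$, whence $0\le D_f(x_\gamma,y)\le\gamma\big(\theta(y)-\theta(x_\gamma)\big)$. As a function in $\Gamma_0(X)$, $\theta$ has a continuous affine minorant and is therefore bounded below on the compact set $S$, say by $m$; hence $D_f(x_\gamma,y)\le\gamma\big(\theta(y)-m\big)\to 0$. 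I expect this to be the main obstacle, and it is precisely the place where the comparison point $y$ must satisfy $\theta(y)<+\infty$: for $y\notin\dom\theta$---for instance $\theta=\iota_C$ with $y\notin C$, where $\bprox{\gamma\theta}(y)=\bproj{C}y$ is independent of $\gamma$---the quantity $D_f(x_\gamma,y)$ does not tend to $0$, so the finiteness of $\theta(y)$ is the genuine content driving the limit. Accordingly I would run this step with $\theta(y)<+\infty$ (approximating $y$ by points of $U\cap\dom\theta$ if only $y\in\overline{\dom\theta}$ is available).

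Finally I would feed this into \cref{l:cvg}. Set $\phi:=D_f(\cdot,y)$, which is lower semicontinuous with $\inf\phi(X)=0$ attained at the unique point $y$, by \cref{f:marco}\ref{f:marcoi}. The net $(x_\gamma)$ lies in the compact set $S$ and satisfies $\phi(x_\gamma)\to 0=\inf\phi(X)$ by the previous paragraph, so \cref{l:cvg} yields that every cluster point of $(x_\gamma)$ minimizes $\phi$; as that minimizer is unique, $x_\gamma\to y$. For \ref{p:limprox_right} the argument is verbatim with $\fprox{\gamma\theta}(x)$ and $D_f(x,\cdot)$, using the coercivity of $D_f(x,\cdot)$ from \textbf{A4}, \cref{f:marco}\ref{f:marcoi}, \cref{p:Lange}\ref{p:Langeii}, and \cref{l:cvg} in place of their left counterparts.
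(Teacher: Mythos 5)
Your argument is correct and follows essentially the same route as the paper's proof: keep the net $\big(\bprox{\gamma\theta}(y)\big)_{\gamma}$ in a compact set, show $D_f\big(\bprox{\gamma\theta}(y),y\big)\to 0$ via an affine minorant of $\theta$, and conclude with \cref{l:cvg} applied to $\phi=D_f(\cdot,y)$, whose unique minimizer is $y$ by \cref{f:marco}\ref{f:marcoi}. The only real difference is cosmetic: for boundedness the paper places $\bprox{\gamma\theta}(y)$ in the sublevel set $\lev{\theta(y)}g$ of the coercive function $g:=\theta+\tfrac{1}{\mu}D_f(\cdot,y)$, whereas you combine the monotonicity of $\gamma\mapsto D_f\big(\bprox{\gamma\theta}(y),y\big)$ from \cref{p:Lange}\ref{p:Langei} with the coercivity of $D_f(\cdot,y)$ from \cref{f:marco}\ref{f:marcoii}; likewise your lower bound $m$ for $\theta$ on the compact set $S$ plays the role of the paper's Cauchy--Schwarz estimate $\scal{\bprox{\gamma\theta}(y)}{u}\geq-\nu\|u\|$.

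Your side remark on the finiteness of $\theta(y)$ is a genuine catch, not a technicality. As printed, the proposition does not assume $y\in\dom\theta$, and your counterexample ($\theta=\iota_C$ with $y\in U\smallsetminus C$, so that $\bprox{\gamma\theta}(y)=\bproj{C}y$ for every $\gamma$) shows the conclusion then fails. The paper's own proof tacitly uses $\theta(y)<+\infty$ twice: the sublevel set $\lev{\theta(y)}g$ of a coercive function is bounded only when the level $\theta(y)$ is finite, and the final estimate $D_f\big(\bprox{\gamma\theta}(y),y\big)\leq\gamma\big(\theta(y)+\nu\|u\|-\eta\big)\to 0$ requires a finite coefficient. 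So the correct hypothesis is $y\in U\cap\dom\theta$ in \ref{p:limprox_left} (and $x\in U\cap\dom\theta$ in \ref{p:limprox_right}), which is exactly how you run the argument; your parenthetical about approximating $y$ by points of $U\cap\dom\theta$ is not needed for the statement as corrected and would require a separate argument if pursued.
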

\begin{proof}
\ref{p:limprox_left}: Noting that $(\forall \gamma \in \left]0, \mu\right])$ 
$\theta +\frac{1}{\gamma}D_f(\cdot, y) \geq \theta +\frac{1}{\mu}D_f(\cdot, y)$,
we have that \eqref{e:bcoeR} holds for all $\gamma \in \left]0, \mu\right]$.
In particular, $g :=\theta +\tfrac{1}{\mu}D_f(\cdot, y)$ is coercive.
By \cref{p:12.9}\ref{p:12.9_left} and \eqref{e:benv-bprox}, 
\begin{equation}
\label{e:gamma<1}
\big(\forall\gamma \in \left]0, \mu\right]\big)\quad 
\theta(y) \geq \benv[\gamma]{\theta}(y)
=\theta\big(\bprox{\gamma\theta}(y)\big) +\frac{1}{\gamma}
D_f\big(\bprox{\gamma\theta}(y), y\big) 
\geq g\big(\bprox{\gamma\theta}(y)\big)
\end{equation}
and so $\bprox{\gamma\theta}(y) \in \lev{\theta(y)}g$. 
The coercivity of $g$ and \cite[Proposition~11.12]{BC17} imply that 
$\nu :=\sup_{\gamma \in \left]0, \mu\right]} \|\bprox{\gamma\theta}(y)\| <+\infty$.
Now by \cite[Theorem~9.20]{BC17}, 
there exist $u \in X$ and $\eta \in \RR$ such that 
$\theta \geq \scal{\cdot}{u} +\eta$. 
Using \eqref{e:gamma<1} and Cauchy--Schwarz yields
\begin{subequations}
\begin{align}
\big(\forall\gamma \in \left]0, \mu\right]\big)\quad 
\theta(y) 
&\geq \theta\big(\bprox{\gamma\theta}(y)\big) +\frac{1}{\gamma}
D_f\big(\bprox{\gamma\theta}(y), y\big)\\
&\geq \scal{\bprox{\gamma\theta}(y)}{u} +\eta
+\frac{1}{\gamma}D_f\big(\bprox{\gamma\theta}(y), y\big) \\
&\geq -\nu\|u\| +\eta
+\frac{1}{\gamma}D_f\big(\bprox{\gamma\theta}(y), y\big), 
\end{align}
\end{subequations}
which gives
\begin{equation}
0 \leq D_f\big(\bprox{\gamma\theta}(y), y\big) 
\leq \gamma\big(\theta(y) +\nu\|u\| -\eta\big) \to 0 \quad\text{as}\quad \gamma \downarrow 0,
\end{equation}
and thus $D_f(\bprox{\gamma\theta}(y), y) \to 0$ as $\gamma \downarrow 0$. 
Observing that $D_f(\cdot, y) =f(\cdot) -f(y) -\scal{\nabla f(y)}{\cdot -y}$ is lower semicontinuous, 
that $\argmin D_f(\cdot, y) =\{y\}$ by \cref{f:marco}\ref{f:marcoi}, 
and that $\sup_{\gamma \in \left]0, 1\right[} \|\bprox{\gamma\theta}(y)\| <+\infty$, 
it follows from \cref{l:cvg} that $\bprox{\gamma\theta}(y) \to y$ as $\gamma \downarrow 0$.

\ref{p:limprox_right}: This is similar to \ref{p:limprox_left}. 
\end{proof}

\begin{theorem}
\label{p:12.32}
Let $\theta\in\Gamma_0(X)$ be such that 
$U\cap \dom\theta\neq\varnothing$ and let $x, y \in U$.
Then the following hold:
\begin{enumerate}
\item 
\label{p:12.32_left}
If \eqref{e:bcoeR} holds for some $\mu \in \RPP$ 
and $\gamma \downarrow 0$, then
$\benv[\gamma]{\theta}(y) \uparrow \theta(y)$,
$\theta(\bprox{\gamma\theta}(y))\uparrow \theta(y)$,
and 
$\tfrac{1}{\gamma}D_f(\bprox{\gamma\theta}(y), y)\to 0$.
\item
\label{p:12.32_right}
If \eqref{e:fcoeR} holds for some $\mu \in \RPP$,
and $\gamma \downarrow 0$, then
$\fenv[\gamma]{\theta}(x) \uparrow \theta(x)$,
$\theta(\fprox{\gamma\theta}(x))\uparrow \theta(x)$,
and 
$\tfrac{1}{\gamma}D_f(x,\fprox{\gamma\theta}(x))\to 0$.
\end{enumerate}
\end{theorem}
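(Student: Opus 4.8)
The plan is to treat the left variant \ref{p:12.32_left} in full and note that \ref{p:12.32_right} is entirely analogous. Throughout, abbreviate $p_\gamma := \bprox{\gamma\theta}(y)$. The three assertions are bound together by the exact decomposition \eqref{e:benv-bprox}, namely $\benv[\gamma]{\theta}(y) = \theta(p_\gamma) + \tfrac{1}{\gamma}D_f(p_\gamma,y)$ (valid for every sufficiently small $\gamma$, since coercivity \eqref{e:bcoeR} at $\mu$ propagates to all $\gamma\in\left]0,\mu\right]$, exactly as observed in the proof of \cref{p:limprox}). Consequently it suffices to pin down the limits of two of the three quantities and read off the third.

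First I would invoke \cref{p:limprox}\ref{p:limprox_left}, which under \eqref{e:bcoeR} already delivers the key convergence $p_\gamma \to y$ as $\gamma\downarrow 0$. Since $\theta\in\Gamma_0(X)$ is lower semicontinuous, this gives $\varliminf_{\gamma\downarrow 0}\theta(p_\gamma)\geq\theta(y)$. In the other direction, \eqref{e:benv-bprox'} supplies the uniform sandwich $\theta(p_\gamma)\leq\benv[\gamma]{\theta}(y)\leq\theta(y)$, whence $\varlimsup_{\gamma\downarrow 0}\theta(p_\gamma)\leq\theta(y)$. Combining the two bounds yields $\lim_{\gamma\downarrow 0}\theta(p_\gamma)=\theta(y)$, and the same sandwich forces $\lim_{\gamma\downarrow 0}\benv[\gamma]{\theta}(y)=\theta(y)$.

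Next I would upgrade these plain limits to the claimed monotone convergences. The envelope is monotone by \cref{p:12.9}\ref{p:12.9_left}: for $\mu>\gamma$ one has $\benv[\mu]{\theta}(y)\leq\benv[\gamma]{\theta}(y)$, so $\benv[\gamma]{\theta}(y)$ increases as $\gamma\downarrow 0$, giving $\benv[\gamma]{\theta}(y)\uparrow\theta(y)$. Likewise, \cref{p:Lange}\ref{p:Langei} yields $\theta(p_\mu)\leq\theta(p_\gamma)$ for $\mu>\gamma$, so $\theta(p_\gamma)$ increases as $\gamma\downarrow 0$, giving $\theta(p_\gamma)\uparrow\theta(y)$. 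Finally, rearranging \eqref{e:benv-bprox} as $\tfrac{1}{\gamma}D_f(p_\gamma,y)=\benv[\gamma]{\theta}(y)-\theta(p_\gamma)$ and inserting the two limits just established shows $\tfrac{1}{\gamma}D_f(p_\gamma,y)\to\theta(y)-\theta(y)=0$.

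I do not expect a serious obstacle, because \cref{p:limprox} already performs the substantive analytic work of establishing $p_\gamma\to y$. The one delicate point is the lower estimate $\varliminf_{\gamma\downarrow 0}\theta(p_\gamma)\geq\theta(y)$: the matching upper bound is a formal consequence of the envelope inequality \eqref{e:benv-bprox'}, but the lower bound is available only because lower semicontinuity of $\theta$ is combined with the genuine convergence $p_\gamma\to y$. The right variant \ref{p:12.32_right} runs verbatim, replacing \eqref{e:benv-bprox}, \eqref{e:benv-bprox'}, \cref{p:12.9}\ref{p:12.9_left}, \cref{p:Lange}\ref{p:Langei}, and \cref{p:limprox}\ref{p:limprox_left} by their right-hand counterparts \eqref{e:fenv-fprox}, \cref{p:12.9}\ref{p:12.9_right}, \cref{p:Lange}\ref{p:Langeii}, and \cref{p:limprox}\ref{p:limprox_right}.
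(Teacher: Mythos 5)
Your proof is correct and follows essentially the same route as the paper's: both rest on \cref{p:limprox}\ref{p:limprox_left} for $\bprox{\gamma\theta}(y)\to y$, lower semicontinuity of $\theta$ combined with the sandwich coming from \eqref{e:benv-bproxall}, monotonicity via \cref{p:12.9}\ref{p:12.9_left} and \cref{p:Lange}\ref{p:Langei}, and the decomposition \eqref{e:benv-bprox} to force $\tfrac{1}{\gamma}D_f(\bprox{\gamma\theta}(y),y)\to 0$. The only difference is cosmetic: the paper first names the monotone limit $\beta$ of the envelope and then identifies $\beta=\theta(y)$, whereas you establish the plain limits first and afterwards cite the monotonicity results to upgrade them.
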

\begin{proof}
\ref{p:12.32_left}: According to \cref{p:12.9}\ref{p:12.9_left}, 
there exists $\beta \in \RR$ such that $\benv[\gamma]{\theta}(y) \uparrow \beta \leq \theta(y)$ as $\gamma \downarrow 0$.
Combining with \eqref{e:benv-bprox}, we have
\begin{equation}
\label{e:170430x}
\big(\forall\gamma \in \left]0, \mu\right]\big)\quad 
\theta(y) \geq \beta \geq \benv[\gamma]{\theta}(y) 
=\theta\big(\bprox{\gamma\theta}(y)\big) +\frac{1}{\gamma}
D_f\big(\bprox{\gamma\theta}(y), y\big) 
\geq \theta\big(\bprox{\gamma\theta}(y)\big).
\end{equation}
This together with the fact that $\lim_{\gamma \downarrow 0} \bprox{\gamma\theta}(y) =y$ by \cref{p:limprox}\ref{p:limprox_left}, 
and the lower semicontinuity of $\theta$ implies 
\begin{equation}
\theta(y) \geq \beta \geq \varliminf_{\gamma \downarrow 0}
\theta\big(\bprox{\gamma\theta}(y)\big) \geq \theta(y) \geq 
 \varlimsup_{\gamma \downarrow 0}
\theta\big(\bprox{\gamma\theta}(y)\big),
\end{equation} 
and then $\beta =\theta(y)=\lim_{\gamma\downarrow 0}\theta(\bprox{\gamma\theta}(y))$.
Now recall \eqref{e:170430x} and Proposition~\ref{p:Lange}\ref{p:Langei}. 

\ref{p:12.32_right}: This is similar to \ref{p:12.32_left}.
\end{proof}

For a variant of the result from \cref{p:12.32}\ref{p:12.32_left} that 
$\benv[\gamma]{\theta}(y) \uparrow \theta(y)$ as $\gamma \downarrow 0$, see \cite[Theorem~2.5]{KS}.
Note that $D_f(\bprox{\gamma\theta}(y), y)$ is monotone with respect to $\gamma$, 
as shown in \cref{p:Lange}\ref{p:Langei}, 
but the same is not necessarily true for $\tfrac{1}{\gamma}D_f(\bprox{\gamma\theta}(y), y)$ (see \cref{fig:Dh}).
\begin{figure}
	\begin{center}
		\vspace*{\fill}
		\adjustbox{trim={.05\width} {0\height} {0.05\width} {0\height},clip}%
		{\includegraphics[width=.36\linewidth]{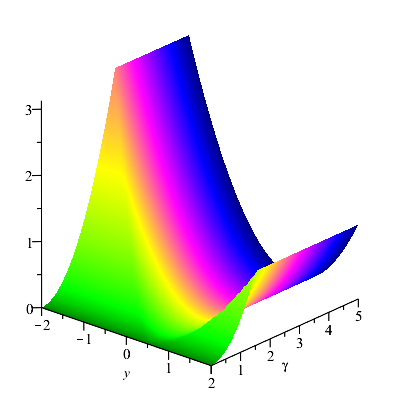}}
		\vspace*{\fill}
		\includegraphics[width=.1\linewidth]{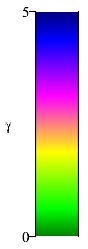}
		\vspace*{\fill}
		\adjustbox{trim={.0\width} {0\height} {0.05\width} {0\height},clip}%
		{\includegraphics[width=.4\linewidth]{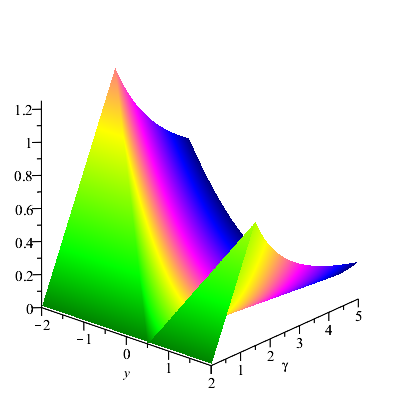}}
		\vspace*{\fill}
	\end{center}

\caption{$D_f\big(\protect\bprox{\gamma\theta}(y), y\big)$ (left) 
and $\frac{1}{\gamma}D_f\big(\protect\bprox{\gamma\theta}(y), y\big)$ (right) 
when $X =\RR$, $f$ is the energy, and $\theta$ is the function $x \mapsto |x -\tfrac{1}{2}|$}
\label{fig:Dh}
\end{figure}

The two following results describe the 
behaviour when $\gamma\uparrow +\infty$.

\begin{proposition}
\label{p:limthetaprox}
Let $\theta\in\Gamma_0(X)$ be such that $U\cap \dom\theta\neq\varnothing$ and let $x, y \in U$.
Then the following hold:
\begin{enumerate}
\item 
\label{p:limthetaprox_left}
If \eqref{e:bcoeR} holds for all $\gamma\in\RPP$, 
then $\theta(\bprox{\gamma\theta}(y)) \to \inf \theta(X)$ as $\gamma \uparrow +\infty$.
\item 
\label{p:limthetaprox_right}
If \eqref{e:fcoeR} holds for all $\gamma\in \RPP$, then  
$\theta(\fprox{\gamma\theta}(x)) \to \inf \theta(X)$ as $\gamma \uparrow +\infty$.
\end{enumerate}	
\end{proposition}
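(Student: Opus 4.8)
The plan is to show that $\theta(\bprox{\gamma\theta}(y)) \to \inf\theta(X)$ by establishing the inequality chain in both directions. The lower bound is automatic: for every $\gamma$ we have $\theta(\bprox{\gamma\theta}(y)) \geq \inf\theta(X)$ by definition of the infimum. So the work is in the upper bound, i.e.\ showing $\varlimsup_{\gamma\uparrow+\infty}\theta(\bprox{\gamma\theta}(y)) \leq \inf\theta(X)$.

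First I would recall from \cref{p:12.9}\ref{p:12.9_left} that $\benv[\gamma]{\theta}(y)\downarrow\inf\theta(X)$ as $\gamma\uparrow+\infty$, and from \eqref{e:benv-bprox} the exact decomposition
\begin{equation}
\benv[\gamma]{\theta}(y) = \theta\big(\bprox{\gamma\theta}(y)\big) + \tfrac{1}{\gamma}D_f\big(\bprox{\gamma\theta}(y), y\big).
\end{equation}
Since $D_f\geq 0$, this immediately gives $\theta(\bprox{\gamma\theta}(y)) \leq \benv[\gamma]{\theta}(y)$, and letting $\gamma\uparrow+\infty$ yields $\varlimsup_{\gamma\uparrow+\infty}\theta(\bprox{\gamma\theta}(y)) \leq \inf\theta(X)$. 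Combined with the trivial lower bound, this already forces $\theta(\bprox{\gamma\theta}(y))\to\inf\theta(X)$. This argument is pleasantly short because the heavy lifting was done earlier; it is essentially just unpacking the envelope decomposition and using nonnegativity of the Bregman distance.

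The one subtlety I would flag is monotonicity of the convergence. \cref{p:Lange}\ref{p:Langei} shows that $\gamma\mapsto\theta(\bprox{\gamma\theta}(y))$ is nonincreasing on $\RPP$, so the net actually decreases to its limit; together with the lower bound $\inf\theta(X)$ this gives $\theta(\bprox{\gamma\theta}(y))\downarrow\inf\theta(X)$ cleanly, and one need not worry about oscillation. The hypothesis that \eqref{e:bcoeR} holds for \emph{all} $\gamma\in\RPP$ (rather than merely some $\mu$) is exactly what licenses applying \eqref{e:benv-bprox} and \cref{p:Lange}\ref{p:Langei} for arbitrarily large $\gamma$, which is what the limit requires.

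The right-hand case \ref{p:limthetaprox_right} is entirely parallel: replace $\bprox{}$ by $\fprox{}$, invoke \cref{p:12.9}\ref{p:12.9_right} for $\fenv[\gamma]{\theta}(x)\downarrow\inf\theta(X)$ and \eqref{e:fenv-fprox} for the decomposition, and use the analogous right-variant monotonicity (\cref{p:Lange}\ref{p:Langeii}). I expect no genuine obstacle here — the main point is simply recognizing that the nonnegativity of $D_f$ turns the already-established envelope limit into the claimed proximal-value limit. If anything, the only thing to be careful about is not conflating $\theta(\bprox{\gamma\theta}(y))$ with $\benv[\gamma]{\theta}(y)$ itself, since they differ by the (vanishing in the limit, but not identically zero) term $\tfrac{1}{\gamma}D_f(\bprox{\gamma\theta}(y),y)$.
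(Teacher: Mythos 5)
Your proof is correct and is essentially identical to the paper's: both squeeze $\theta\big(\bprox{\gamma\theta}(y)\big)$ between $\inf\theta(X)$ and $\benv[\gamma]{\theta}(y)$ (the upper bound being exactly \eqref{e:benv-bprox'}, i.e.\ the decomposition \eqref{e:benv-bprox} plus $D_f\geq 0$) and then invoke $\benv[\gamma]{\theta}(y)\downarrow\inf\theta(X)$ from \cref{p:12.9}\ref{p:12.9_left}. Your additional monotonicity observation via \cref{p:Lange}\ref{p:Langei} is a correct but unnecessary refinement; the paper does not use it.
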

\begin{proof}
We shall just prove \ref{p:limthetaprox_left} 
because the proof of \ref{p:limthetaprox_right} is similar.
Assume that \eqref{e:bcoeR} holds for all 
$\gamma \in \RPP$.
Combining \eqref{e:benv-bprox'} with \cref{p:12.9}\ref{p:12.9_left} yields
\begin{equation}
\inf\theta(X) \leq \theta\big(\bprox{\gamma\theta}(y)\big) \leq \benv[\gamma]{\theta}(y) \to \inf\theta(X) 
\quad\text{as}\quad \gamma \uparrow +\infty,
\end{equation} 
which implies that $\theta(\bprox{\gamma\theta}(y)) \to 
\inf\theta(X)$ as $\gamma \uparrow +\infty$. 
\end{proof}

\begin{theorem}
\label{p:limprox+}
Let $\theta\in\Gamma_0(X)$ be coercive such that 
$U\cap \dom\theta\neq\varnothing$ and let $x, y \in U$.
Then the following hold:
\begin{enumerate}
\item 
\label{p:limprox+_left}
The net $(\bprox{\gamma\theta}(y))_{\gamma \in \RPP}$ is bounded 
with all cluster points as $\gamma \uparrow +\infty$ lying in $\argmin\theta$. 
Moreover,
	\begin{enumerate}
	\item\label{p:limprox+_leftA} 
	if $\argmin\theta$ is a singleton, 
	then $\bprox{\gamma\theta}(y) \to \argmin\theta$ as $\gamma \uparrow +\infty$;
	\item\label{p:limprox+_leftP} 
	if $\argmin\theta \subseteq U$, 
	then $\bprox{\gamma\theta}(y) \to \bproj{\argmin\theta}y$ as $\gamma \uparrow +\infty$. 
	\end{enumerate}

\item 
\label{p:limprox+_right}
The net $(\fprox{\gamma\theta}(x))_{\gamma \in \RPP}$ is bounded 
with all cluster points as $\gamma \uparrow +\infty$ lying in $\argmin\theta$. 
Moreover,
	\begin{enumerate}
	\item 
	if $\argmin\theta$ is a singleton, 
	then $\fprox{\gamma\theta}(x) \to \argmin\theta$ as $\gamma \uparrow +\infty$;
	\item\label{p:limprox+_rightP} 
	if $\argmin\theta \subseteq U$, 
	then $\fprox{\gamma\theta}(x) \to \fproj{\argmin\theta}x$ as $\gamma \uparrow +\infty$.  
	\end{enumerate}
\end{enumerate}	
\end{theorem}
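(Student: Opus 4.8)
The plan is to prove the left statement \ref{p:limprox+_left} in full and deduce the right one by symmetry. Note first that, $\theta$ being coercive, $\inf\theta(X)>-\infty$ and $\argmin\theta$ is a nonempty compact convex set; in particular condition (b) of \cref{f:coeR} is met, so \eqref{e:bcoeR} and \eqref{e:fcoeR} hold for every $\gamma\in\RPP$, and all the proximity operators and projectors involved (including $\bproj{\argmin\theta}$ under the hypothesis $\argmin\theta\subseteq U$, since then $U\cap\argmin\theta=\argmin\theta\neq\varnothing$) are well defined.

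For boundedness and the cluster-point claim, I would argue as follows. Abbreviate $z_\gamma:=\bprox{\gamma\theta}(y)$. By \eqref{e:benv-bprox'}, $\theta(z_\gamma)\le\benv[\gamma]{\theta}(y)\le\theta(y)$, so the whole net lies in $\lev{\theta(y)}\theta$, which is bounded by coercivity; enclose it in a closed (hence compact) ball $C$. Since \cref{p:limthetaprox}\ref{p:limthetaprox_left} gives $\theta(z_\gamma)\to\inf\theta(X)$, applying \cref{l:cvg} with $\mathcal X=X$, $\phi=\theta$ and this $C$ shows that all cluster points lie in $\argmin\theta$ and, when $\argmin\theta$ is a singleton, that $z_\gamma$ converges to it, which is case \ref{p:limprox+_leftA}.

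The substantive step is \ref{p:limprox+_leftP}, where I must identify every cluster point with $\bproj{\argmin\theta}y$. Put $C:=\argmin\theta$. By the optimality condition \cref{p:prox}\ref{p:prox_left}, $\tfrac1\gamma(\nabla f(y)-\nabla f(z_\gamma))\in\partial\theta(z_\gamma)$, so for each $p\in C$ the subgradient inequality together with $\theta(p)=\inf\theta(X)\le\theta(z_\gamma)$ yields $\scal{\nabla f(y)-\nabla f(z_\gamma)}{z_\gamma-p}\ge0$. Let $z$ be any cluster point, say $z_{\gamma_a}\to z$ along a subnet with $\gamma_a\uparrow+\infty$. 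Because $C\subseteq U$ forces $z\in U$, the map $\nabla f$ is continuous at $z$, and passing to the limit gives $\scal{\nabla f(y)-\nabla f(z)}{p-z}\le0$ for all $p\in C$; by \cref{c:BregProj}\ref{c:BregProj_left} this says precisely $z=\bproj{C}y$. Thus the bounded net has the unique cluster point $\bproj{C}y$ and therefore converges to it.

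The right statement \ref{p:limprox+_right} is entirely analogous: replace \eqref{e:benv-bprox'} by its counterpart in \eqref{e:fenv-fproxall}, use \cref{p:limthetaprox}\ref{p:limthetaprox_right}, and exploit the optimality condition \cref{p:prox}\ref{p:prox_right}, which (with $y_\gamma:=\fprox{\gamma\theta}(x)$) turns the key inequality into $\scal{\nabla^2 f(y_\gamma)(x-y_\gamma)}{p-y_\gamma}\le0$; passing to the limit — now using the continuity of $\nabla^2 f$ on $U$ from assumption \textbf{A1} — and invoking \cref{c:BregProj}\ref{c:BregProj_right} identifies each cluster point with $\fproj{\argmin\theta}x$. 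I expect the main obstacle to be exactly this identification in part (b): the convergence $\theta(z_\gamma)\to\inf\theta(X)$ alone only locates cluster points inside $\argmin\theta$, and it is the pairing of the subgradient inequality with the projector characterization of \cref{c:BregProj}, made legitimate by the hypothesis $\argmin\theta\subseteq U$ (which places cluster points where $\nabla f$, resp.\ $\nabla^2 f$, is continuous), that pins the limit down.
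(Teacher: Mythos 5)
Your proof is correct and follows essentially the same route as the paper's: boundedness via the level-set bound \eqref{e:benv-bprox'} plus coercivity, location of cluster points via \cref{p:limthetaprox} and \cref{l:cvg}, and identification of the limit in case (b) by passing to the limit in the variational inequality of \cref{p:prox} (your subgradient form is equivalent to the paper's condition (c)) and invoking \cref{c:BregProj}, with continuity of $\nabla f$ (resp.\ $\nabla^2 f$) justified by $\argmin\theta\subseteq U$. The only cosmetic differences are that you justify \eqref{e:bcoeR}--\eqref{e:fcoeR} via condition (b) of \cref{f:coeR} rather than the paper's direct observation that $\theta$ coercive and $D_f\geq 0$ suffice, and you work with subnets where the paper extracts a sequence.
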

\begin{proof}
First, by assumption, \cite[Proposition~11.15(i)]{BC17} gives $\argmin\theta \neq\varnothing$. 
This combined with \cite[Lemma~1.24 and Corollary~8.5]{BC17} implies that 
$\argmin\theta =\lev{\inf\theta(X)}\theta$ is a nonempty closed convex subset of $X$.
Now since $\theta$ is coercive and since $D_f\geq 0$, we immediately get that  
\eqref{e:bcoeR} and \eqref{e:fcoeR} hold for all $\gamma \in \RPP$.

\ref{p:limprox+_left}: It follows from \eqref{e:benv-bprox'} that   
\begin{equation}
\label{e:bounded}
(\forall \gamma \in \RPP)\quad \bprox{\gamma\theta}(y) \in \lev{\theta(y)}\theta,
\end{equation}
and then
from the coercivity of $\theta$ and \cite[Proposition~11.12]{BC17} that $(\bprox{\gamma\theta}(y))_{\gamma \in \RPP}$ is bounded.
In turn, \cref{p:limthetaprox}\ref{p:limthetaprox_left} and \cref{l:cvg} imply that
all cluster points of $(\bprox{\gamma\theta}(y))_{\gamma \in \RPP}$ as $\gamma \uparrow +\infty$ lie in $\argmin\theta$, 
and we get \ref{p:limprox+_leftA}.

Now assume that $\argmin\theta \subseteq U$. 
Let $y'$ be a cluster point of $(\bprox{\gamma\theta}(y))_{\gamma \in \RPP}$ as $\gamma \uparrow +\infty$.  
Then $y' \in \argmin\theta \subseteq U$ and there exists a sequence $(\gamma_n)_\nnn$ in $\RPP$ such that 
$\gamma_n \uparrow +\infty$ and $\bprox{\gamma_n\theta}(y) \to y'$ as $n \to +\infty$.
Let $z \in \argmin\theta$.  
We have $(\forall\nnn)$ $\theta(z) \leq \theta(\bprox{\gamma_n\theta}(y))$, 
and by Proposition~\ref{p:prox}\ref{p:prox_left},
\begin{equation}
\scal{\nabla f(y) -\nabla f\big(\bprox{\gamma_n\theta}(y)\big)}{z -\bprox{\gamma_n\theta}(y)} 
\leq \gamma_n\Big(\theta(z)
-\theta\big(\bprox{\gamma_n\theta}(y)\big)\Big) \leq 0.
\end{equation}
Taking the limit as $n \to +\infty$ and using the continuity of $\nabla f$ yield
\begin{equation}
\scal{\nabla f(y) -\nabla f(y')}{z -y'} \leq 0.
\end{equation}
Since $z \in \argmin\theta$ was chosen arbitrarily 
and since $\argmin\theta$ is a closed convex subset of $X$ with 
$U\cap \argmin\theta \neq\varnothing$, 
in view of \cref{c:BregProj}\ref{c:BregProj_left}, $y' =\bproj{\argmin\theta}y$, 
and $\bproj{\argmin\theta}y$ is thus the only cluster point of $(\bprox{\gamma\theta}(y))_{\gamma \in \RPP}$ 
as $\gamma \uparrow +\infty$.
Hence, \ref{p:limprox+_leftP} holds.

\ref{p:limprox+_right}: 
The proof is similar to the one of \ref{p:limprox+_left}.
\end{proof}

\begin{remark}
Suppose that $f =\frac{1}{2}\|\cdot\|^2$ and let $\theta\in\Gamma_0(X)$ be coercive. 
By \cref{r:BregProj} and \cref{p:limprox+}, 
\begin{equation}
(\forall x \in X)\quad \prox_{\gamma\theta}(x) \to P_{\argmin\theta}x \quad\text{and}\quad \gamma \uparrow +\infty.
\end{equation}
\end{remark}

\begin{corollary}
Let $\theta\in\Gamma_0(\RR)$ be coercive such that $\argmin\theta
\subseteq U$ and let $z \in U$.
Then $\bprox{\gamma\theta}(z) \to P_{\argmin\theta}z$ and 
$\fprox{\gamma\theta}(z) \to P_{\argmin\theta}z$ 
as $\gamma \uparrow +\infty$.  
\end{corollary}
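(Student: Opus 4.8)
The plan is to recognize this final corollary as an immediate specialization of \cref{p:limprox+} to the one-dimensional setting $X=\RR$, combined with the earlier observation (\cref{p:allproj}) that in one dimension all three projectors coincide. First I would verify that the hypotheses of \cref{p:limprox+} are met: we are given $\theta\in\Gamma_0(\RR)$ coercive with $\argmin\theta\subseteq U$, and since $\argmin\theta$ is nonempty (coercivity guarantees this, as noted in the proof of \cref{p:limprox+}) and contained in $U$, we have in particular $U\cap\dom\theta\neq\varnothing$. Thus both \ref{p:limprox+_leftP} and \ref{p:limprox+_rightP} apply directly with the single point $z\in U$ playing the role of both $x$ and $y$.

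Applying \cref{p:limprox+}\ref{p:limprox+_left}\ref{p:limprox+_leftP} gives $\bprox{\gamma\theta}(z)\to\bproj{\argmin\theta}z$ as $\gamma\uparrow+\infty$, and applying \cref{p:limprox+}\ref{p:limprox+_right}\ref{p:limprox+_rightP} gives $\fprox{\gamma\theta}(z)\to\fproj{\argmin\theta}z$ as $\gamma\uparrow+\infty$. The remaining step is to identify both limits with the orthogonal projection $P_{\argmin\theta}z$. Here I would invoke \cref{p:allproj}: since $X=\RR$ and $\argmin\theta$ is a closed convex subset of $\RR$ with $U\cap\argmin\theta=\argmin\theta\neq\varnothing$, that proposition yields $\bproj{\argmin\theta}=\fproj{\argmin\theta}=\proj{\argmin\theta}$ on $U$. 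Because $z\in U$, we conclude $\bproj{\argmin\theta}z=\fproj{\argmin\theta}z=P_{\argmin\theta}z$, and substituting into the two limit statements completes the argument.

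There is essentially no obstacle here; the corollary is a clean packaging of two prior results. The only points requiring a moment's care are bookkeeping ones: confirming that $\argmin\theta$ really is a closed convex set meeting $U$ so that \cref{p:allproj} is applicable (both facts follow from coercivity of $\theta\in\Gamma_0(\RR)$ together with the assumption $\argmin\theta\subseteq U$), and noting that we use the same point $z$ in both the left and right statements, which is legitimate since $z\in U$ and the left envelope machinery evaluates at $y=z$ while the right evaluates at $x=z$. No new estimates or limit arguments are needed beyond what \cref{p:limprox+} already supplies.
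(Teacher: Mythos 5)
Your proposal is correct and follows essentially the same route as the paper's own proof: use coercivity to get that $\argmin\theta$ is a nonempty closed convex set meeting $U$, apply \cref{p:limprox+}\ref{p:limprox+_leftP}\&\ref{p:limprox+_rightP} to obtain convergence to the left and right Bregman projections, and then identify both limits with $\proj{\argmin\theta}z$ via \cref{p:allproj}. No discrepancies to report.
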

\begin{proof}
As shown in the proof of \cref{p:limprox+}, $\argmin\theta$ is a
nonempty closed convex subset of $X$ and hence 
$U\cap \argmin\theta\neq\varnothing$.
It now suffices to apply
\cref{p:limprox+}\ref{p:limprox+_leftP} and \ref{p:limprox+_rightP}
and to use \cref{p:allproj}.
\end{proof}

\section{Examples and function minimization}

\label{s:vier}

In this final section, we illustrate our theory by considering the
case in which $\theta$ is the \emph{nonsmooth} function
$x\mapsto |x-\tfrac{1}{2}|$. 

\begin{example}
\label{ex:illus}
Suppose that $X =\RR$ and let $\theta\colon \RR\to \RR\colon x\mapsto |x -\frac{1}{2}|$. 
Then $\theta \in \Gamma_0(X)$, $\dom\theta =X$, and $\theta$ is coercive with $\argmin\theta =\{\frac{1}{2}\}$. 
It follows that $U\cap \dom\theta  =U \neq\varnothing$ and, 
by \cref{f:coeR}, 
the assumptions \eqref{e:bcoeR} and \eqref{e:fcoeR} hold for all $\gamma \in \RPP$.
We revisit \cref{ex:examples} (with $J=1$) 
to illustrate \cref{p:min}, \cref{p:limprox}, and \cref{p:limprox+}. 
Let $\gamma \in \RPP$. 
We recall from \cref{p:prox} that 
\begin{equation}
\label{e:bprox}
\bprox{\gamma\theta}=(\nabla f +\gamma\partial\theta)^{-1}\circ \nabla f 
\end{equation}
and that
\begin{equation}
\label{e:fprox}
(\forall (x, y) \in U\times U)\quad 
y =\fprox{\gamma\theta}(x) \quad\Leftrightarrow\quad 0 \in \gamma\partial\theta(y) +\nabla^2 f(y)(y -x).
\end{equation}
Note that 
\begin{equation}
\partial\theta(x) =\begin{cases}
1, &\text{~if~} x >\tfrac{1}{2}; \\[+2mm]
-1, &\text{~if~} x <\tfrac{1}{2}; \\[+2mm]
\left[-1, 1\right], &\text{~if~} x =\tfrac{1}{2}. 
\end{cases}
\end{equation}

\begin{enumerate}
\item
\label{ex:illus_energy} 
\emph{Energy:}
Suppose that $f$ is the energy.
Then $U =\intdom f =\RR$. 
Since $\nabla f =\Id$, 
by \cref{r:energy} and \eqref{e:bprox}, 
$\bprox{\gamma\theta} =\fprox{\gamma\theta} =
(\Id +\gamma \partial\theta)^{-1}$.
We have that
\begin{equation}
\big(\Id +\gamma \partial\theta\big)(x) =\begin{cases}
x -\gamma, &\text{~if~} x <\tfrac{1}{2}; \\[+2mm]
x +\gamma, &\text{~if~} x >\tfrac{1}{2}; \\[+2mm]
\left[\tfrac{1}{2} -\gamma, \tfrac{1}{2}+\gamma\right], &\text{~if~} x =\tfrac{1}{2}.
\end{cases}
\end{equation}
Then $(\nabla f +\gamma \partial\theta)^{-1}(y)$ amounts to solving $(\nabla f +\gamma \partial\theta)(x) =y$ piecewise. 
For example, solving $x -\gamma =y$ for $x <\tfrac{1}{2}$ yields $x =y +\gamma$ for $y +\gamma <\tfrac{1}{2}$, 
so $(\nabla f +\gamma \theta)^{-1}(y) =y +\gamma$ for $y <\tfrac{1}{2} -\gamma$. 
Continuing in this fashion,
\begin{equation}
\bprox{\gamma\theta}(y) =
\fprox{\gamma\theta}(y) =(\Id +\gamma \partial\theta)^{-1}(y) 
=\begin{cases}
y +\gamma, &\text{~if~} y <\tfrac{1}{2} -\gamma; \\[+2mm]
y -\gamma, &\text{~if~} y >\tfrac{1}{2} +\gamma; \\[+2mm]
\tfrac{1}{2}, & \text{otherwise}
\end{cases}
\end{equation}
and by \eqref{e:benv-bprox}, 
\begin{equation}
\benv[\gamma]{\theta}(y) =\fenv[\gamma]{\theta}(y)
=\begin{cases}
-y +\tfrac{1-\gamma}{2}, &\text{~if~} y <\tfrac{1}{2} -\gamma;
\\[+2mm]
y -\tfrac{1+\gamma}{2}, &\text{~if~} y >\tfrac{1}{2} +\gamma;
\\[+2mm]
\tfrac{4y^2 -4y +1}{8\gamma}, &\text{~otherwise}.
\end{cases}
\end{equation}
It is clear that $\bprox{\gamma\theta}(\frac{1}{2}) =\frac{1}{2}$, 
while $(\forall y \in \RR\smallsetminus \{\frac{1}{2}\})$ $\bprox{\gamma\theta}(y) \neq y$, 
and so $\Fix\bprox{\gamma\theta} =\{\frac{1}{2}\} =\argmin\theta$.
As expected, $\bprox{\gamma\theta}(y) \to y$ as $\gamma \downarrow 0$, 
and $\bprox{\gamma\theta}(y) \to \frac{1}{2} =\argmin\theta$ as $\gamma \uparrow +\infty$. 
Moreover, $\benv[\gamma]{\theta}(y)\to\theta(y)$ as
$\theta\downarrow 0$; 
this is illustrated in \cref{fig:energy}.

\begin{figure}
	\begin{center}
		\vspace*{\fill}
		\adjustbox{trim={.15\width} {.1\height} {0.15\width} {.2\height},clip}%
		{\includegraphics[width=.41\columnwidth]{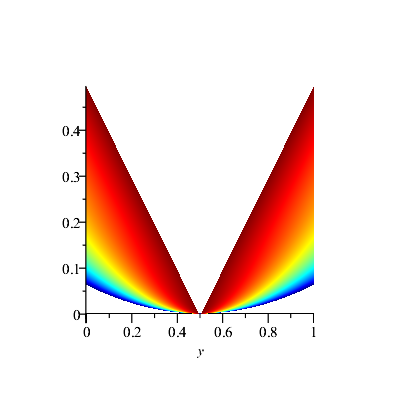}}
		\vspace*{\fill}
		\includegraphics[width=.1\columnwidth]{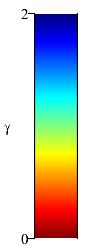}
		\vspace*{\fill}
		\includegraphics[width=.39\columnwidth]{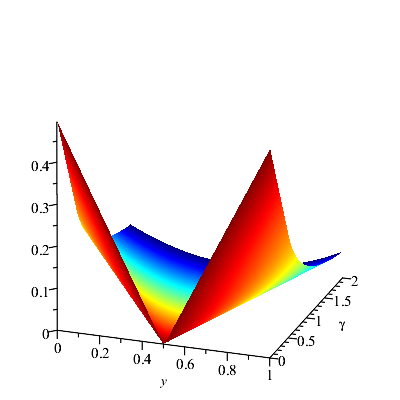}
		\vspace*{\fill}
	\end{center}
	
	\caption{Bregman envelope from \cref{ex:illus}\ref{ex:illus_energy}} \label{fig:energy}
\end{figure}

\item
\label{ex:illus_KL}
\emph{Boltzmann--Shannon entropy:} 
Suppose that $f$ is the Boltzmann--Shannon entropy.
Then $\dom f =\RP$, $U =\intdom f =\RPP$, 
$\nabla f(x) =\ln x$, and $\nabla^2f(x) =1/x$.
Again employing \eqref{e:bprox} 
and \eqref{e:benv-bprox}, we have
\begin{subequations}
\begin{align}
(\nabla f +\gamma \partial \theta)^{-1}(y) &=\begin{cases}
\exp(y +\gamma), &\text{~if~} y <-\ln 2 -\gamma; \\[+2mm]
\exp(y -\gamma), &\text{~if~} y >-\ln 2 +\gamma; \\[+2mm]
\tfrac{1}{2}, &\text{~otherwise}, 
\end{cases}\\[+4mm]
\bprox{\gamma\theta}(y) &=\begin{cases}
y\exp(\gamma), &\text{~if~} 0 <y <\tfrac{1}{2}\exp(-\gamma);
\\[+2mm]
y\exp(-\gamma), &\text{~if~} y >\tfrac{1}{2}\exp(\gamma);
\\[+2mm]
\tfrac{1}{2}, &\text{~otherwise},
\end{cases}\\[+4mm]
\benv[\gamma]{\theta}(y) &=\begin{cases}
\tfrac{y(1-e^\gamma)}{\gamma} + \tfrac{1}{2}, &\text{~if~} 0 <y
<\tfrac{1}{2}\exp(-\gamma); \\[+2mm]
\tfrac{y(1-e^{-\gamma})}{\gamma} - \tfrac{1}{2}, &\text{~if~}
\tfrac{1}{2}\exp(\gamma) <y; \\[+2mm]
\tfrac{2y- \ln(y) -1 -\ln(2)}{2\gamma}, &\text{~otherwise}.
\end{cases}
\end{align}
\end{subequations}
Clearly $\Fix\bprox{\gamma\theta} =\{\frac{1}{2}\} =\argmin\theta$.
It can also be seen that $\bprox{\gamma\theta}(y) \to y$ as $\gamma \downarrow 0$, 
and $\bprox{\gamma\theta}(y) \to \frac{1}{2} =\argmin\theta$ as $\gamma \uparrow +\infty$. 
Moreover, once again $\benv[\gamma]{\theta}(y)\to\theta(y)$ as
$\theta\downarrow 0$. 
This example is illustrated in \cref{fig:entropy}.

\begin{figure}
	\begin{center}
	\vspace*{\fill}
	\adjustbox{trim={.15\width} {.1\height} {0.15\width} {.15\height},clip}%
	{\includegraphics[width=.43\columnwidth]{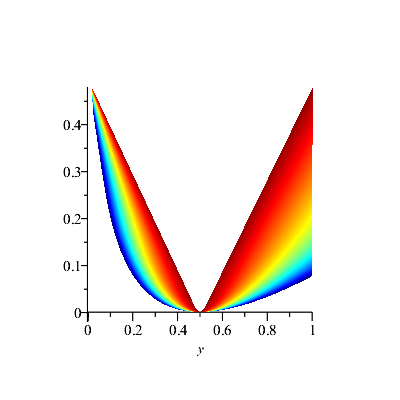}}
	\vspace*{\fill}
	\includegraphics[width=.1\columnwidth]{env_key}
	\vspace*{\fill}
	\includegraphics[width=.37\columnwidth]{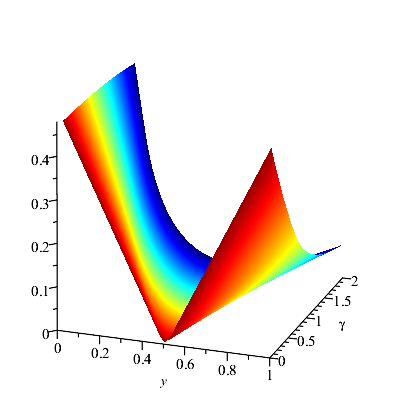}
	\vspace*{\fill}
	\end{center}
	\caption{Left Bregman envelope from \cref{ex:illus}\ref{ex:illus_KL}} \label{fig:entropy}
\end{figure}

Now \eqref{e:fprox} implies that for every $(x, y) \in \RPP\times \RPP$,
\begin{equation}
y =\fprox{\gamma\theta}(x) \quad\Leftrightarrow\quad 0 \in \gamma\partial\theta(y) +\frac{1}{y}(y -x)
\quad\Leftrightarrow\quad x \in y\big(1 +\gamma\partial\theta(y)\big).
\end{equation}
Solving the induced system of equations yields
\begin{equation}
\fprox{\gamma\theta}(x) =\begin{cases}
\tfrac{x}{1 -\gamma}, &\text{~if~} 0 <x <\tfrac{1 -\gamma}{2};
\\[+2mm]
\tfrac{x}{1 +\gamma}, &\text{~if~} x >\tfrac{1 +\gamma}{2};
\\[+2mm]
\tfrac{1}{2}, &\text{~otherwise}.
\end{cases}
\end{equation}
Using \eqref{e:fenv-fprox} and noting that $\frac{x}{1 -\gamma} <\frac{1}{2}$ if $x <\frac{1 -\gamma}{2}$ 
and $\frac{x}{1 +\gamma} >\frac{1}{2}$ if $x >\frac{1 +\gamma}{2}$, we obtain
\begin{equation}
\fenv[\gamma]{\theta}(x) =\begin{cases}
\tfrac{\ln(1 -\gamma)}{\gamma}x + \tfrac{1}{2}, &\text{~if~} 0 <x
<\tfrac{1 -\gamma}{2}; \\[+2mm]
\tfrac{\ln(1 +\gamma)}{\gamma}x - \tfrac{1}{2}, &\text{~if~} x
>\tfrac{1 +\gamma}{2}; \\[+2mm]
\tfrac{1}{\gamma}\left(x\ln(2x) -x +\tfrac{1}{2}\right), &\text{~otherwise}.
\end{cases}
\end{equation}
The right envelope is shown in \cref{fig:rightentropy}.

\begin{figure}
	\begin{center}
		\vspace*{\fill}
		\adjustbox{trim={.15\width} {.09\height} {0.15\width} {.15\height},clip}%
		{\includegraphics[width=.41\columnwidth]{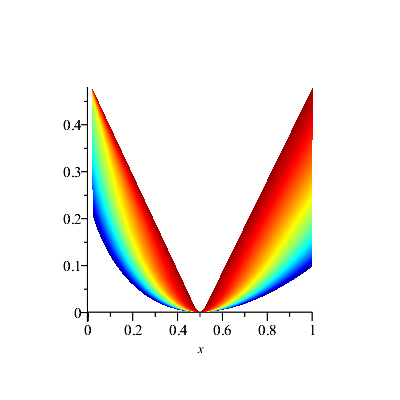}}
		\vspace*{\fill}
		\includegraphics[width=.1\columnwidth]{env_key}
		\vspace*{\fill}
		{\includegraphics[width=.39\columnwidth]{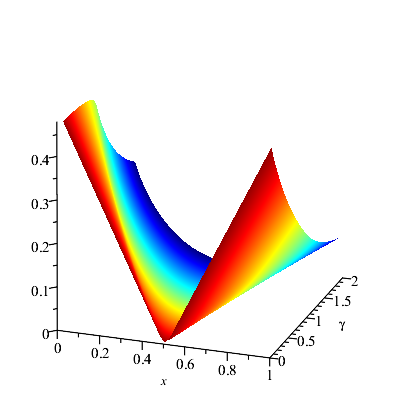}}
		\vspace*{\fill}
	\end{center}
	\caption{Right Bregman envelope from \cref{ex:illus}\ref{ex:illus_KL}}
	\label{fig:rightentropy}
\end{figure}

\item 
\label{ex:illus_FD}
\emph{Fermi--Dirac entropy:} 
Suppose that $f$ is the Fermi--Dirac entropy.
Then $\dom f =\left[0, 1\right]$, $U =\intdom f =\left]0, 1\right[$, 
$\nabla f(x) =\ln\left(\frac{x}{1 -x}\right)$, and $\nabla^2 f(x) =\frac{1}{x(1 -x)}$. 
Again by \eqref{e:bprox},  
\begin{subequations}
\begin{align}
(\nabla f +\gamma \partial\theta)^{-1}(y) &=\begin{cases}
\tfrac{\exp(y+\gamma)}{\exp(y+\gamma)+1}, &\text{~if~} y
<-\gamma; \\[+2mm]
\tfrac{\exp(y-\gamma)}{\exp(y-\gamma)+1}, &\text{~if~} y >\gamma;
\\[+2mm]
\tfrac{1}{2}, &\text{~otherwise},
\end{cases}\\[+4mm]
\bprox{\gamma\theta}(y) &=\begin{cases}
\tfrac{y\exp(\gamma)}{y\exp(\gamma) +1 -y} &\text{~if~} 0 < y
<\tfrac{\exp(-\gamma)}{1 +\exp(-\gamma)}; \\[+2mm]
\tfrac{y\exp(-\gamma)}{y\exp(-\gamma) +1 -y}, &\text{~if~}
\tfrac{\exp(\gamma)}{1 +\exp(\gamma)} <y < 1; \\[+2mm]
\tfrac{1}{2}, &\text{~otherwise}.
\end{cases}
\end{align}
\end{subequations}
A formula for $\benv[\gamma]{\theta}$ may be once again obtained
by using \eqref{e:benv-bprox}: 
\begin{align}
\benv[]{\gamma\theta}=\begin{cases}
-\frac{2\ln(y\exp(\gamma)-y+1)-\gamma}{2\gamma}, &\text{~if~}
0<y<\frac{\exp(-\gamma)}{1+\exp(-\gamma)};\\[+2mm]
-\frac{2\ln(y\exp(-\gamma)-y+1)+\gamma}{2\gamma}, 
&\text{~if~} 1 > y > \frac{\exp(\gamma)}{1+\exp(\gamma)};\\[+2mm]
-\frac{2\ln(2)+\ln(1-y)+\ln(y)}{2\gamma}, 
&\text{~if~} \frac{\exp(-\gamma)}{1+\exp(-\gamma)}\leq y \leq \frac{\exp(\gamma)}{1+\exp(\gamma)}.
\end{cases}
\end{align}

We illustrate this envelope in \cref{fig:fd}.

\begin{figure}
	\begin{center}
	\vspace*{\fill}
	\adjustbox{trim={.15\width} {.1\height} {0.15\width} {.15\height},clip}%
	{\includegraphics[width=.43\columnwidth]{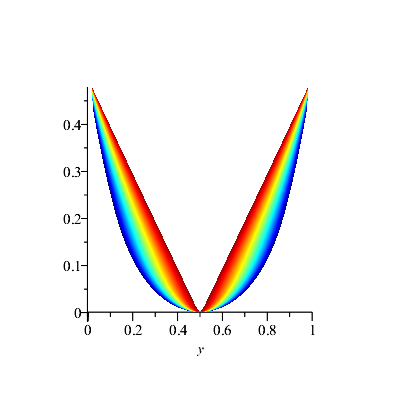}}
	\vspace*{\fill}
	\includegraphics[width=.1\columnwidth]{env_key}
	\vspace*{\fill}
	\includegraphics[width=.37\columnwidth]{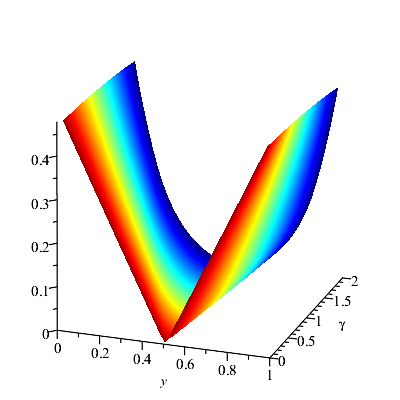}
	\vspace*{\fill}
	\end{center}
	\caption{Left Bregman envelope from \cref{ex:illus}\ref{ex:illus_FD}} \label{fig:fd}
\end{figure}

Next we have from \eqref{e:fprox} that for every $(x, y) \in \left]0, 1\right[\times \left]0, 1\right[$,
\begin{equation}
y =\fprox{\gamma\theta}(x) \quad\Leftrightarrow\quad 0 \in \gamma\partial\theta(y) +\frac{1}{y(1 -y)}(y -x)
\quad\Leftrightarrow\quad x \in \gamma y(1 -y)\partial\theta(y) +y.
\end{equation}
Solving the induced system of equations gives
\begin{equation}
\fprox{\gamma\theta}(x) =\begin{cases}
\tfrac{\gamma -1 +\sqrt{(\gamma -1)^2 +4\gamma x}}{2\gamma},
&\text{~if~} 0 <x <\tfrac{2 -\gamma}{4}; \\[+2mm]
\tfrac{\gamma +1 -\sqrt{(\gamma +1)^2 -4\gamma x}}{2\gamma},
&\text{~if~} \tfrac{2 +\gamma}{4} <x <1; \\[+2mm]
\tfrac{1}{2}, &\text{~otherwise}
\end{cases}
\end{equation}
and, in turn, \eqref{e:fenv-fprox} gives
\begin{equation}
\fenv{\gamma\theta}(x)=\begin{cases}
\frac{2\ln\left( \frac{2\gamma x^x
(\gamma + 1-\sqrt{\gamma^2+4\gamma x -2\gamma +1})^{x-1}
}{(\gamma-1+\sqrt{\gamma^2+4\gamma x -2\gamma +1})^x
(1-x)^{x-1}}\right) +1-\sqrt{\gamma^2+4\gamma x -2\gamma
+1}}{2\gamma}, &\text{~if~} 0<x<\frac{2-\gamma}{4};\\[+2mm]
\frac{2\ln\left(\frac{2\gamma x^x (\gamma
-1+\sqrt{\gamma^2-4\gamma x +2\gamma
+1})^{x-1}}{(1-x)^{x-1}(\gamma +1-\sqrt{\gamma^2-4\gamma x
+2\gamma +1})^x} \right) +1-\sqrt{\gamma^2-4\gamma x +2\gamma
+1}}{2\gamma}, &\text{~if~} \tfrac{2 +\gamma}{4} <x <1; \\[+2mm]
\frac{x\ln(x)+(1-x)\ln(1-x)+\ln(2)}{\gamma}, &\text{~otherwise}. 
\end{cases}
\end{equation}
The right envelope is shown in \cref{fig:rightfd}.
\begin{figure}
	\begin{center}
		\vspace*{\fill}
		\adjustbox{trim={.15\width} {.09\height} {0.15\width} {.15\height},clip}%
		{\includegraphics[width=.43\columnwidth]{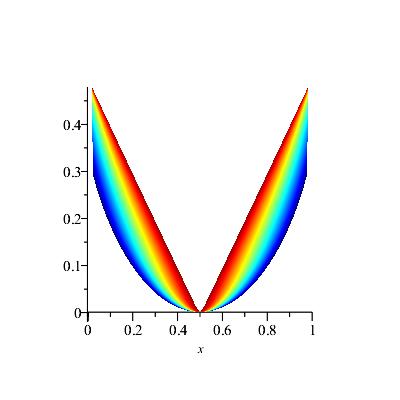}}
		\vspace*{\fill}
		\includegraphics[width=.1\columnwidth]{env_key}
		\vspace*{\fill}
		\includegraphics[width=.37\columnwidth]{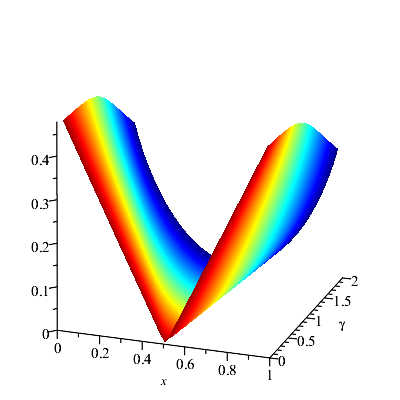}
		\vspace*{\fill}
	\end{center}
	\caption{Right Bregman envelope from \cref{ex:illus}\ref{ex:illus_FD}}
	\label{fig:rightfd}
\end{figure}

\end{enumerate}
\end{example}

We conclude this section with some remarks concerning the
minimization of the (nonlinear) functional 
\begin{align}
	I_\tau \colon L^1[0,1] \to\RR\colon 
	 x \mapsto \int_{0}^{1} \tau\big(x(s)\big) {\rm d}s, 
\end{align} 
where $\tau$ is a convex, lower semicontinuous, and proper, and
subject to finitely many constraints 
\begin{equation}
	\langle a_k,x \rangle = \int_0^1 a_k(s)x(s) {\rm d}s =
	b_k, \quad \text{for $k\in\{1,\ldots,n\}$}, 
\end{equation}
and where $a_k \in L^\infty[0,1]$ and $\rho$ was used to generate the 
(consistent) data: $\scal{a_k}{\rho}=b_k$. 
Under appropriate assumptions (for details, see \cite{BL2000}, \cite{BL1991}, \cite[Section 7]{BL2016}, \cite[Theorem 6.3.4]{BV2010}, \cite{BY2014}, \cite[Section 4.7]{BZ2005}), 
recovering the (primal) solution $x = x_\tau$ amounts to first obtaining a
dual solution by solving the finite system of nonlinear equations
\begin{equation}\label{optimalmultipliers}
	\int_0^1 (\tau^*)' \left(\sum_{j=1}^n \mu_j a_j(s)
	\right) a_k(s) {\rm d}s = b_k,\quad \text{where~}
	k\in\{1,\ldots,n\},
\end{equation}
followed by computing
\begin{equation}\label{eqn:primal}
x_\tau = \big(\tau^*\big)' \Big(\sum_{j=1}^n \mu_j a_j(s) \Big).
\end{equation}
\begin{figure}
	\begin{center}
		\vspace*{\fill}
		\adjustbox{trim={.15\width} {.09\height} {0.15\width} {.15\height},clip}%
		{\includegraphics[width=.5\columnwidth]{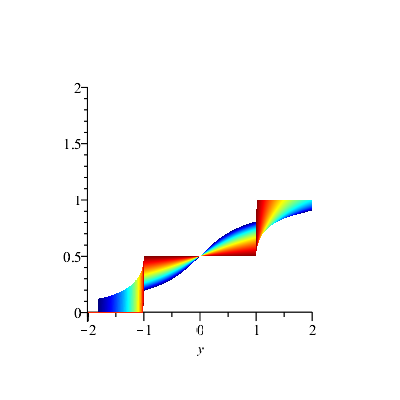}}
		\vspace*{\fill}
		\includegraphics[width=.1\columnwidth]{env_key}
		\vspace*{\fill}
		\includegraphics[width=.42\columnwidth]{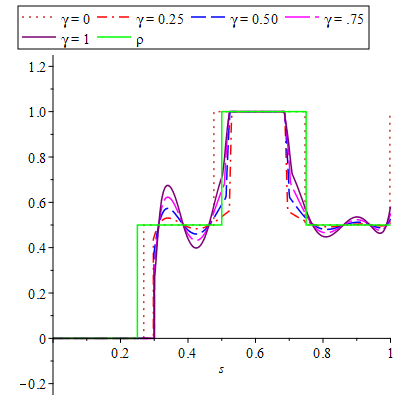}
		\vspace*{\fill}
	\end{center}
	\caption{$(\benv[\gamma]{\theta})^{*\prime}$ for $\gamma \in [0,2]$ (left). Primal solutions \eqref{eqn:primal} for selected $\gamma$ values in $[0,1]$ (right).}\label{fig:entropyminimization}
\end{figure}
As a numerical illustration, we assume that 
$\tau = \benv[\gamma]{\theta}$ is the left Bregman envelope from 
 \cref{ex:illus}\ref{ex:illus_FD} and $\rho$ is the step function
 pictured in Figure~\ref{fig:entropyminimization} (right). We clearly observe the influence of the parameter $\gamma$; smaller values of $\gamma$ lead to primal solutions that are closer to the step function that was used to generate the data. Were a different, smoother $\rho$ to be used, a larger value of $\gamma$ might be more appropriate. The reason for the varying extent to which the primal solutions for different choices of $\gamma$ resemble step functions may be gleaned from Figure~\ref{fig:entropyminimization} (left), where $(\tau^*)'$---upon which the primal solution \eqref{eqn:primal} depends---is shown.

Similarly attempting to compute with $\tau$ as the right envelope from \cref{ex:illus}\ref{ex:illus_FD}, we are unable to symbolically invert the gradient. We leave the numerical attempt at inversion as future work.

\subsection*{Acknowledgments}
HHB was partially supported by the Natural Sciences and
Engineering Research Council of Canada.
MND was partially supported by the Australian Research Council Discovery Project DP160101537.

\end{document}